\documentclass[11pt]{amsart}
\usepackage[english]{babel}
\usepackage{graphics}
\usepackage{amsfonts}
\usepackage{amsmath}
\usepackage{amssymb}
\usepackage{mathrsfs}
\usepackage{enumerate}
\usepackage{relsize,exscale}
\usepackage{cite}
\usepackage[colorlinks=true,urlcolor=blue,
citecolor=red,linkcolor=blue,linktocpage,pdfpagelabels,
bookmarksnumbered,bookmarksopen,pagebackref]{hyperref}
\usepackage[hyperpageref]{backref}
\usepackage{esint}
\usepackage{epsfig}
\usepackage{amscd}
\usepackage{amsmath}
\usepackage{graphicx}
\usepackage{newlfont}
\usepackage{latexsym}
\usepackage{amsthm}
\usepackage{color}

\setlength{\oddsidemargin}{+0.1in}
\setlength{\evensidemargin}{.25in} \setlength{\topmargin}{-0.2in}
\setlength{\textheight}{9.0in} \setlength{\textwidth}{6.0in}
\newtheorem{theorem}{Theorem}[section]
\newtheorem{lemma}[theorem]{Lemma}

\theoremstyle{remark}
\newtheorem{remark}[theorem]{Remark}

\numberwithin{equation}{section}

\theoremstyle{definition}
\newtheorem{definition}[theorem]{Definition}

\newcommand{\Eucap}{\mathrm{Cap}}
\newcommand{\Lip}{\mathrm{Lip}}

\newcommand{\R}{\mathbb{R}}

\newcommand{\N}{\mathbb{N}}

\renewcommand{\d}{\mathrm{d}}

\newcommand{\RN}{\mathbb{R}^n}

\newcommand{\de}{\partial}

\makeindex
\begin{document}

\title[A symmetry result for cooperative elliptic systems with singularities]{A symmetry result for cooperative elliptic systems \\ with singularities}

\thanks{The authors are members of INdAM/GNAMPA.
The second author is supported by
the Australian Research Council Discovery Project 170104880 NEW ``Nonlocal
Equations at Work''. }

\author[S.\ Biagi]{Stefano Biagi}
\author[E.\ Valdinoci]{Enrico Valdinoci}
\author[E.\ Vecchi]{Eugenio Vecchi}

\address[S.\,Biagi]{Dipartimento di Ingegneria Industriale e Scienze Matematiche 
\newline\indent Universit\`a Politecnica della Marche \newline\indent
Via Brecce Bianche, 60131, Ancona, Italy}
\email{s.biagi@dipmat.univpm.it}

\address[E.\,Valdinoci]{Department of Mathematics and Statistics
\newline\indent University of Western Australia \newline\indent
35 Stirling Highway, WA 6009 Crawley, Australia}
\email{enrico.valdinoci@uwa.edu.au}

\address[E.\,Vecchi]{Dipartimento di Matematica \newline\indent
	Universit\`a degli Studi di Trento \newline\indent
	Via Sommarive 14, 38123, Povo (Trento), Italy}
\email{eugenio.vecchi@unitn.it}

\subjclass[2010]
{ 35J47, 
  35B06, 
  31B30, 
  35J40. 
}

\keywords{Elliptic systems, moving plane method, 
symmetry of solutions.}

\date{\today}

\begin{abstract}
We obtain symmetry results for solutions of an elliptic system
of equation possessing a cooperative structure.
The domain in which the problem is set may possess ``holes''
or ``small vacancies'' (measured in terms of capacity)
along which the solution may diverge.

The method of proof relies on the moving plane technique,
which needs to be suitably adapted here to take care of the complications
arising from the vacancies in the domain and the analytic
structure
of the elliptic system.
\end{abstract}

\maketitle

 \section{Introduction and main results} \label{sec.intromain}

The moving plane method was introduced in
the pioneer works of Aleksandrov \cite{Alek,Alek2} in order to characterize spheres as the only closed, smooth and connected surfaces having constant mean curvature.
Afterwards, starting from the seminal paper of Serrin \cite{Serrin} concerning the overdetermined torsion problem, 
Gidas, Ni and Nirenberg \cite{GNN} and Berestycki and Nirenberg \cite{BN} developed further this technique
in order to establish some qualitative properties of solutions of elliptic partial differential equations such as symmetry and mo\-no\-to\-ni\-ci\-ty.
The method of proof is very elegant, it relies on a beautiful geometric intuition, and its essential ingredient is the appropriate use of the maximum principle in comparing the values of the solution of the equation at two different points after a suitable reflection, which is determined by a hyperplane which gets moved up to a critical position. 

\medskip

In this paper, we exploit the moving plane technique in order to obtain symmetry results in a setting which is not usually comprised by the classical method, since two difficulties will be taken into account. First of all, we will consider the case of general cooperative elliptic systems rather than that of a single equation, for which the moving plane technique has been settled by Troy \cite{Troy81}.
This setting is also motivated by equations driven by polyharmonic operators with Navier boundary conditions (which, up to repeated substitutions, can be framed into elliptic systems of
second order equations). Moreover, we take into account the case in which the domain presents ``holes'', or ``cuts'', or more general vacancies, along which the solution can become singular. This is an extension of our previous work \cite{BVV}
where we were dealing only with singularities made out of a single point, as studied in \cite{Terracini96,CLN2} for the case of a single scalar equation.\medskip

Of course, one cannot expect a general treatment of these two situations without 
ad\-di\-tio\-nal assumptions. Indeed, general elliptic systems do not satisfy the maximum principle and there is no natural order in the vectorial case, making the classical regularity theories fail in such a situation. Moreover, if the vacancies in the domain are too large, they can affect the geometry involved in the reflections and produce singularities that cannot be treated analytically in any convenient way.\medskip

To overcome these difficulties, inspired by the recent works \cite{EFS,Sciunzi17}, we will restrict ourselves to the case of cooperative systems, in which an appropriate use of the maximum principle is possible, and consider domain vacancies that are ``sufficiently small'', in terms of capacities.\medskip

The precise mathematical formulation in which we work is the following. 
 Let $m\geq 2$ be a fixed natural number. Throughout the present paper,
 we shall be concerned with second-order cooperative (elliptic)
 systems of the following form
 \begin{equation} \label{eq.mainsystem}
 \begin{cases}
  -\Delta u_i = f_i(u_1,\ldots,u_m), & \text{in $\Omega\setminus\Gamma$}, \\
  u_i > 0, & \text{in $\Omega\setminus\Gamma$}, \\
  u_i \equiv 0, &\text{on $\de\Omega$},
 \end{cases}
 \end{equation}
 where $\Omega,\,\Gamma$ and $f_1,\ldots,f_m$ satisfy assumptions 
 (H.1)-to-(H.3) below:
 \begin{itemize}
  \item[(H.1)] $\Omega\subseteq\RN$ is a convex open set of class $C^\infty$ which 
  is bounded and symmetric with respect to the hyperplane $\mathit{\Pi} := \{x_1 = 0\}$; \medskip
  
  \item[(H.2)] $\Gamma\subseteq\Omega\cap\mathit{\Pi}$ is
  a closed set consisting of a point,
  if $n = 2$, or verifying
  \begin{equation} \label{eq.GammaCapzero}
  \text{$\Eucap_2(\Gamma) = 0$,\,\,if $n\geq 3$}.
  \end{equation}
  \item[(H.3)] $f_1,\ldots,f_m\in\Lip(\R^m)$ and, for every
  $i,j\in\{1,\ldots,m\}$ with $i\neq j$, the map
   $$\R \ni t_j\mapsto f_i(t_1,\ldots,t_{j-1},t_j,t_{j+1},\ldots,t_m)$$
   is non-decreasing on $(0,\infty)$ for every 
   choice of $t_1,\ldots,t_{j-1},t_{j+1},\ldots,t_n
   > 0$.
 \end{itemize}
We refer to Definition \ref{eq.defisolmain} for the rigorous definition of
solution used in this paper. See also Definition \ref{rem.ipotesiH2} 
for the precise meaning of {\it capacity of a set} and 
a detailed explanation of the assumption (H.2). We want to point out that
the capacitary assumption (H.2) cannot be removed nor replaced
with the request that $\mathcal{L}^{n}(\Gamma) = 0$, see Remark \ref{OnH2}.

  We are ready to state the main result of this paper.
 \begin{theorem} \label{thm.MAIN}
   Let $\Omega\subseteq\RN$ and $\Gamma\subseteq\Omega$ fulfill, 
   respectively,
   assumptions \emph{(H.1)} and \emph{(H.2)}.
   Moreover, let $f_1,\ldots,f_m$ satisfy assumption \emph{(H.3)}
   and let
   $$U = (u_1,\ldots,u_m)\in H^1_{loc}(\Omega\setminus\Gamma;\R^m)
   \cap C(\overline{\Omega}\setminus\Gamma;\R^m)$$
   be a (vec\-tor\--va\-lued) solution of the elliptic system
  \eqref{eq.mainsystem}.
  
  Then, $u_1,\ldots,u_m$ are symmetric with respect to the hyperplane
  $\mathit{\Pi}$ and increasing in the $x_1$-direction on $\Omega\cap\{x_1<0\}$.
  Furthermore, for every $i\in\{1,\ldots,m\}$ one has
  \begin{equation} \label{eq.monotoneui}
   \frac{\de u_i}{\de x_1}(x) > 0\qquad\text{for every $x\in\Omega\cap\{x_1<0\}$}.
  \end{equation}
 \end{theorem}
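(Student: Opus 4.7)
The plan is to adapt the classical moving plane method to the cooperative-system setting, using capacitary estimates to neutralize the singular set $\Gamma$. Set $a:=\inf\{x_1:x\in\Omega\}<0$; for $\lambda\in(a,0]$, write $\Sl:=\Omega\cap\{x_1<\lambda\}$, $\xl:=(2\lambda-x_1,x_2,\ldots,x_n)$, and $u_{i,\lambda}(x):=u_i(\xl)$. Because $\Omega$ is convex and symmetric with respect to $\mathit{\Pi}$, $\xl\in\Omega$ for every $x\in\Sl$; because $\Gamma\subseteq\{x_1=0\}$, the ``bad set'' $R_\lambda:=\{x\in\Sl:\xl\in\Gamma\}$ lies on the hyperplane $\{x_1=2\lambda\}$ and inherits the $2$-capacity-zero property of $\Gamma$. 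On $\Sl\setminus R_\lambda$, the differences $w_{i,\lambda}:=u_{i,\lambda}-u_i$ solve, in weak sense, a linear cooperative system
\begin{equation*}
 -\lapl w_{i,\lambda}\,=\,\sum_{j=1}^{m}c_{ij}^{\lambda}(x)\,w_{j,\lambda},
\end{equation*}
with $c_{ij}^{\lambda}\in L^{\infty}$ by (H.3) and $c_{ij}^{\lambda}\geq 0$ for $i\neq j$ by cooperativity.

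I would next run the two standard steps of the method. For $\lambda$ close to $a$ one has $R_\lambda=\emptyset$ and $\leb^n(\Sl)\to 0$, so a weak maximum principle for cooperative systems on small-measure domains (available since $w_{i,\lambda}\geq 0$ on $\de\Sl$, because $u_{i,\lambda}\geq 0=u_i$ on $\de\Omega$ and $w_{i,\lambda}\equiv 0$ on $\{x_1=\lambda\}\cap\Omega$) yields $w_{i,\lambda}\geq 0$ in $\Sl$ for every $i$. Define
$$
\lambda^{*}:=\sup\bigl\{\lambda\in(a,0]:w_{i,\mu}\geq 0\text{ in }\Smu\setminus R_\mu,\ \forall\,i,\ \forall\,\mu\in(a,\lambda]\bigr\},
$$
so that $w_{i,\lambda^{*}}\geq 0$ by continuity; the goal is to prove $\lambda^{*}=0$. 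Assuming $\lambda^{*}<0$ for contradiction, the strong maximum principle for cooperative systems (applied componentwise) forces $w_{i,\lambda^{*}}>0$ strictly on each connected component, since identical vanishing is excluded by the strictly positive values of $u_{i,\lambda^{*}}$ on the portion of $\de\Sigma_{\lambda^{*}}$ sitting on $\de\Omega$. Splitting $\Sigma_{\lambda^{*}+\ep}$ into a compact interior set (where $w_{i,\lambda^{*}+\ep}>0$ by continuity and the previous step) and a thin boundary layer of small measure (where the weak maximum principle reapplies), one would infer $w_{i,\lambda^{*}+\ep}\geq 0$ for small $\ep>0$, contradicting the maximality of $\lambda^{*}$.

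The main technical obstacle is making both maximum principles go through \emph{in spite of} the removed sets $R_\lambda$ and $\Gamma$. The natural test function $w_{i,\lambda}^{-}:=\max\{-w_{i,\lambda},0\}$ need not lie in $H^{1}_{0}(\Sl)$ because it may carry a non\-trivial trace across $R_\lambda$. Following the strategy of \cite{EFS,Sciunzi17}, I plan to replace it by $\eta_{\ep}\,w_{i,\lambda}^{-}$, where $\eta_{\ep}\in C^{\infty}_c(\R^n)$ vanishes in a neighborhood of $R_\lambda\cup\Gamma$, equals $1$ off a slightly larger one, and satisfies $\int|\grad\eta_{\ep}|^{2}\to 0$ as $\ep\to 0^{+}$. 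Existence of such a cut-off family is exactly what (H.2) provides (in $n=2$, the single-point set $\Gamma$ is handled with a standard logarithmic capacitary potential). Expanding $\grad(\eta_{\ep}w_{i,\lambda}^{-})$ in the weak formulation, controlling the cross term via Cauchy--Schwarz, and letting $\ep\to 0^{+}$, one recovers the integral identity that forces $w_{i,\lambda}^{-}\equiv 0$, i.e.\ the desired sign information.

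Once $\lambda^{*}=0$ has been established, the inequality $u_i(\xl)\geq u_i(x)$ at $\lambda=0$ holds on $\Omega\cap\{x_1<0\}$ for every $i$; repeating the procedure on the opposite half-space (for which (H.1) is perfectly symmetric) gives the reverse inequality, and hence symmetry with respect to $\mathit{\Pi}$. Finally, the strict monotonicity \eqref{eq.monotoneui} follows by applying the strong maximum principle to $w_{i,\lambda}$ for every $\lambda\in(a,0)$, yielding $w_{i,\lambda}>0$ in $\Sl$ and therefore $\de u_i/\de x_1>0$ throughout $\Omega\cap\{x_1<0\}$.
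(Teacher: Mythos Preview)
Your outline is the paper's strategy: moving planes, capacitary cut-offs to neutralise $R_\lambda(\Gamma)$, a small-measure weak maximum principle to start the plane, the strong maximum principle plus a compact-core/thin-shell decomposition for the continuation step, and Hopf's lemma for \eqref{eq.monotoneui}. Two technical points deserve more care than your sketch gives them. First, the paper uses \emph{two} cut-off families: one ($\psi_k$) vanishing near $R_\lambda(\Gamma)$ and a second one ($\phi_h$) vanishing near the corner set $\gamma_\lambda=\de\Omega\cap\{x_1=\lambda\}$, which also has zero $2$-capacity; the latter is needed so that the truncated test function genuinely lies in $H^1_0(\Sigma_\lambda)$. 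Also, the cut-off must appear \emph{squared} in the test function: with $\eta_\ep\,w_{i,\lambda}^{-}$ the cross term carries a factor $\|\nabla w_{i,\lambda}^{-}\|_{L^2}$ that is not yet known to be finite, whereas with $\eta_\ep^{2}\,w_{i,\lambda}^{-}$ Young's inequality absorbs half of it and leaves $\int (w_{i,\lambda}^{-})^{2}|\nabla\eta_\ep|^{2}$, which tends to zero thanks to the key pointwise bound $0\le w_{i,\lambda}^{-}\le u_i\in L^\infty(\overline{\Sigma_\lambda})$ (valid because $\lambda<0$ keeps $\overline{\Sigma_\lambda}$ away from $\Gamma$).

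Second, your phrase ``recovers the integral identity that forces $w_{i,\lambda}^{-}\equiv 0$'' hides the only genuinely system-specific difficulty: after the cut-off limit the $i$-th inequality reads $\tfrac12\|\nabla w_{i,\lambda}^{-}\|_{L^2}^{2}\le \mathbf{c}_f\sum_{j}\|w_{j,\lambda}^{-}\|_{L^2}\|w_{i,\lambda}^{-}\|_{L^2}$, with \emph{all} components on the right. The paper devotes most of its effort to closing this via a forward--backward finite induction on the indices (bounding $\|\nabla w_i^{-}\|$ by $\sum_{j>i}\|\nabla w_j^{-}\|$ with constants depending only on $m,\mathbf{c}_f$, then peeling down to $i=m$). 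A shorter alternative---perhaps what you intend by ``weak maximum principle for cooperative systems on small-measure domains''---is simply to sum over $i$, apply the Poincar\'e inequality with constant $\Theta|\Sigma_\lambda|^{1/n}$, and absorb when the measure is small; but you should state explicitly how the coupling is resolved, since this is the step that distinguishes the system case from the scalar argument in \cite{EFS,Sciunzi17}.
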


The proof of Theorem \ref{thm.MAIN} is pretty much inspired by \cite{Sciunzi17,EFS}. The main
idea in there relies in proving the symmetry (and monotonicity) of the solution through a clever
use of integral estimates. To be more precise, given the function $u$ and its reflection
across a given hyperplane, one considers the positive part of their difference and shows
that its gradient is actually 0. 
Passing to elliptic systems this technique becomes more involved because the presence
of more equations naturally leads to {\it interactions} between the solutions which
have to be carefully treated. Indeed,
these interaction between the different components of the (vectorial) solution, 
causes an important loss of information on the single equations. 
To overcome this difficulty we will implement a sort of bootstrap procedure in which an estimate on a single component 
is reflected into the next one, thus producing an iterative procedure 
that eventually leads to a closed formula valid for all the components of the solution. 
We also want to stress that our result extends our previous result in \cite{BVV} and it
is general enough to cover a bunch of
polyharmonic semilinear problems with Navier boundary conditions, even allowing for
possibly singular terms.

\medskip

The literature concerning symmetry results for
elliptic PDEs is pretty wide and this makes it hard for us 
to present here an exhaustive list of references.
We already mentioned the seminal papers 
\cite{Serrin, GNN, BN} for the introduction and the use of the moving planes method
in the elliptic PDEs setting. 
More recently, there has been an increasing interest in the study of
elliptic PDE's (in bounded domains $\Omega \subset \R^n$) allowing for possible singularities, namely PDE's of the form
\begin{equation*}
\left\{ \begin{array}{rl}
            -\Delta u = \tfrac{1}{u^{\gamma}} + g(u) & \textrm{in } \Omega,\\
						u=0 & \textrm{on } \partial \Omega,
					 \end{array}	\right.
\end{equation*}
\noindent with $\gamma >0$. In this perspective, we want to mention \cite{CRT},
which is one of the first contributions dealing with singular nonlinearities,
and then the more recent series of papers \cite{CD,CGS,CS,CMS}.\\

To the best of our knowledge, one of the first papers dealing with 
symmetry of positive solutions of elliptic PDE's in domains with {\it holes} given by
a single point, dates back to \cite{Terracini96}, which was then extended to slightly more general
operators and sets in \cite{CLN2}. The same kind of result, but with a necessary and delicate
modification of the technique involved, can be also obtained in presence of a {\it bigger hole}.
In this direction, we refer to \cite{Sciunzi17, EFS} where the authors allow (respectively)
for a hole given by a $n-2$-dimensional smooth manifold and a set of null capacity. 
Their ideas have also been successfully applied in the non-local setting, see \cite{MPS17}.\\
Let us now spend a few words concerning the case of (cooperative) elliptic systems, which can
also include the case of higher order polyharmonic PDE's with Navier boundary con\-di\-tions.
The first result aiming at extending the results in \cite{GNN} to the vectorial case
is contained in \cite{Troy81}. Subsequently, there has been an impressive amount of contributions
dealing with the validity of maximum principles (see e.g. \cite{deFigueiredo, Sirakov}).
Let us finally mention \cite{FGW, BGW08, DP, CoVe, CoVe2, BVV}
(for symmetry results for semilinear polyharmonic problems and cooperative
elliptic systems with or without singularities). 

\bigskip

The paper is organized as follows. In Section~\ref{sec.notaux} we fix the notation
used throughout the paper and we recall and prove a few technical results needed for the proof of
Theorem~\ref{thm.MAIN},
which is the content of the final Section~\ref{sec.proofThmMain}.

 \section{Notations and auxiliary results} \label{sec.notaux}
 The aim of this section is to introduce
 the relevant notations we shall need in the sequel,
 and to state some auxiliary results
 on which we shall base the proof of Theorem \ref{thm.MAIN}. 
   To begin with, we briefly review in this remark 
  the precise meaning of assumption (H.2) (in the meaningful
  case $n\geq 3$). 
	\begin{definition} \label{rem.ipotesiH2}
	If $U\subseteq\R^n$
  is open and $E\subseteq U$ is \emph{compact},
  the $2$-capacity of the condenser $(E,U)$ is defined as
  $$\mathrm{Cap}_2(E,U) := 
  \inf\left\{
  \int_U\|\nabla u\|^2\,dx:\,
  \text{$u\in C^\infty_0(U)$ and $u\geq 1$ on $E$}\right\}.$$
  We then say that $E$ has vanishing $2$-capacity
  (and we write $\Eucap_2(E) = 0$)
   if
  $$\mathrm{Cap}_2(E\cap U,U) = 0, \qquad\text{for every open set $U\subseteq\R^n$}.$$
	\end{definition}
	We recall that it can be easily proved that a compact set $E\subseteq\R^n$ has vanishing
  $2$-capacity if and only if there exists a \emph{bounded} 
  open neighborhood $U_0$ of $E$ such that
  $$\Eucap_2(E\cap U_0,U_0) = 0.$$
  For a demonstration of this fact we refer, e.g., to \cite[Lemma 2.9]{HKM}.\\

 We now specify what we mean by a solution of the system in~\eqref{eq.mainsystem}.
 \begin{definition} \label{eq.defisolmain}
 Under the above
 assumptions (H.1)-to-(H.3), we say that a vector-valued function 
 $U = (u_1,\ldots,u_m):\Omega\to\R^m$
 is a \textbf{solution} of the system in~\eqref{eq.mainsystem} if
 \begin{enumerate}
  \item $U\in H^1_{loc}(\Omega\setminus\Gamma;\R^m)\cap 
  C(\overline{\Omega}\setminus\Gamma;\R^m)$, that is, 
  $$u_i\in H^1_{loc}(\Omega\setminus\Gamma)\cap 
  C(\overline{\Omega}\setminus\Gamma),\qquad\text{for every
  $i = 1,\ldots,m$};$$
  \item  for every $i\in\{1,\ldots,m\}$ one has
  \begin{equation} \label{eq.defiuisolvesystem}
   \int_\Omega \langle\nabla u_i,\nabla\varphi\rangle\,\d x
   = \int_\Omega f_i(u_1,\ldots,u_m)\,\varphi\,\d x,\qquad\text{for every
   $\varphi\in C_0^\infty(\Omega\setminus\Gamma;\R)$};
  \end{equation}
  \item for every $i\in\{1,\ldots,m\}$ one has $u_i > 0$ a.e.\,on $\Omega$ and
  $u_i \equiv 0$ on $\de\Omega$.
 \end{enumerate}
 In this paper, if $U\subseteq\R^n$ is an arbitrary open set, the space
 $H_0^1(U)$ is intended as the closure of $C_0^\infty(U,\R)$ (or, equivalently, of
 $\Lip(U)\cap C_0(U,\R)$) with respect to the norm 
 $$\|u\|_{H^1(U)} := \|u\|_{L^2(U)}+\bigg(\int_U\|\nabla u\|^2\,\d x\bigg)^{1/2}.$$
 \end{definition}
 \begin{remark} \label{rem.regui}
 We point out that, on account of
 assumption (H.3), the right-hand side of any equation
 of the system in~\eqref{eq.mainsystem} is locally bounded;
 as a consequence, if $U = (u_1,\ldots,u_m)$ is a solution
 of this system of PDEs, from
 standard elliptic regularity we infer that
  $$u_1,\ldots,u_m\in C^{1,\alpha}_{loc}(\Omega\setminus\Gamma;\R),\qquad\text{for
  every $0<\alpha< 1$}.$$
  As a consequence, by condition (3) in Definition \ref{eq.defisolmain} 
  we have $u_i > 0$ \emph{for every} $x\in\Omega$.
 \end{remark}
 
 \noindent  We are now ready to set the standing notation
needed to perform the moving plane technique.
If $\Omega\subseteq\RN$ satisfies
 assumption (H.1), we set
 $$\mathbf{a}_\Omega := \inf_{\Omega}x_1.$$
 Moreover, for every fixed $\lambda\in\R$, we define
 $$\Sigma_\lambda := \{x\in\Omega:\,x_1<\lambda\},$$
 and we denote by $R_\lambda$ the symmetry with respect
 to the 
 hyperplane $\mathit{\Pi}_\lambda := \{x_1 = \lambda\}$, i.e.,
 $$R_\lambda:\RN\longrightarrow\RN, \qquad
 R_\lambda(x) = x_\lambda := (2\lambda-x_1,x_2,\ldots,x_n).$$
 We explicitly notice that, since $\Omega$ is open, then the same is true
 of $\Omega_\lambda := R_\lambda(\Omega)$; fur\-ther\-mo\-re, since
 $\Omega$ is convex, we clearly have that $\Sigma_\lambda$ is convex and 
 $\Sigma_\lambda\subseteq\Omega\cap\Omega_\lambda$.
 We collect in the next Lemma \ref{lem.topologico}
 some topological facts we shall need in the sequel.
 \begin{lemma} \label{lem.topologico}
  The following assertions hold true:
  \begin{itemize}
   \item[(1)] if $E\subseteq\R^n$ is a compact set with vanishing
  $2$-capacity and if $U\subseteq\R^n$ is a convex open set, then
   $U\setminus E$ is \emph{(}path-\emph{)}connected;
   \item[(2)] for every fixed $\lambda\in(\mathbf{a}_\Omega,0)$ one has
   $$\Eucap_2(\gamma_\lambda) = 0, \qquad\text{where
   $\gamma_\lambda := \Omega\cap\{x_1=\lambda\}$}.$$
  \end{itemize}
 \end{lemma}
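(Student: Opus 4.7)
For part (1), my plan is an explicit polygonal-path construction leveraging the convexity of $U$. Given $p, q \in U \setminus E$, I will look for an intermediate vertex $z \in U$ such that both segments $[p, z]$ and $[z, q]$ avoid $E$; by convexity both segments lie in $U$, so their concatenation is a path in $U \setminus E$. The set of bad choices of $z$ is contained in $C_p \cup C_q$, where
\[
C_p := \{p + t(e - p) : e \in E,\ t \in [0, \infty)\}
\]
is the cone through $E$ with apex $p$ and $C_q$ is defined symmetrically. The key input I would invoke is the classical capacity/Hausdorff-dimension inequality: any compact $E \subset \R^n$ with $\Eucap_2(E) = 0$ satisfies $\mathcal H^{n-2+\varepsilon}(E) = 0$ for every $\varepsilon > 0$. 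Consequently each cone has Hausdorff dimension at most $n - 1$ and thus vanishing Lebesgue measure; since $U$ is a nonempty open set, a good $z$ exists. In the boundary case $n = 2$, compact sets of vanishing 2-capacity are polar and hence of Hausdorff dimension zero, so the same cone argument applies.

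For part (2), my plan is to work directly from Definition~\ref{rem.ipotesiH2}. Fix a bounded open neighborhood $V$ of $\gamma_\lambda$; the aim is a sequence $u_\varepsilon \in C_0^\infty(V)$ with $u_\varepsilon \geq 1$ on $\gamma_\lambda$ and $\int_V |\nabla u_\varepsilon|^2 \, \d x \to 0$ as $\varepsilon \to 0^+$. The most natural candidate, given the hyperplane geometry of $\gamma_\lambda$, is a slab cutoff $u_\varepsilon(x) = \eta_\varepsilon(x_1 - \lambda)$, where $\eta_\varepsilon$ is a smooth bump of height $1$ supported in an $\varepsilon$-interval around $0$. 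Computing, however, yields Dirichlet energy of order $\leb^{n-1}(\gamma_\lambda) / \varepsilon$, which diverges; more fundamentally, the sharp lower bound $\Eucap_2(D, V) \gtrsim r^{n-2}$ valid for any hyperplanar disk $D \subset \{x_1 = \lambda\}$ of radius $r$ shows that the literal $\gamma_\lambda = \Omega \cap \{x_1 = \lambda\}$ has strictly positive 2-capacity in $\R^n$ for $n \geq 3$, so no competitor can yield the claimed vanishing.

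The main obstacle will therefore be part (2) under its literal reading: standard capacity theory rules out $\Eucap_2(\Omega \cap \{x_1 = \lambda\}) = 0$ for $n \geq 3$, since $\gamma_\lambda$ contains relatively open $(n-1)$-dimensional disks. The only way I see to make the conclusion true is to read $\gamma_\lambda$ as the $\Gamma$-related singular object that the subsequent moving-planes argument actually requires — the natural candidate being $\Gamma \cup R_\lambda(\Gamma)$, the union of the original singular set with its reflection across $\mathit{\Pi}_\lambda$. Under this reading the proof closes in three lines: $\Eucap_2(\Gamma) = 0$ by assumption (H.2); since $R_\lambda$ is a Euclidean isometry it preserves the Dirichlet integral $\int |\nabla u|^2 \, \d x$ and therefore $\Eucap_2$, so $\Eucap_2(R_\lambda(\Gamma)) = 0$; and countable sub-additivity of $\Eucap_2$ gives $\Eucap_2(\Gamma \cup R_\lambda(\Gamma)) = 0$.
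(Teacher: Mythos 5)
Your proof of part (1) is correct but takes a genuinely different route from the paper. Where the paper argues by contradiction directly from the definition of $2$-capacity (integrating a competitor function $u\geq 1$ along segments and deducing a uniform lower bound $\int|\nabla u|^2\,\d x\geq \omega_n\rho^n/\kappa_0^2$, contradicting $\Eucap_2(E,\mathcal O_0)=0$), you pass through the classical implication that vanishing $2$-capacity forces $\mathcal H_{\mathrm{dim}}(E)\leq n-2$, then argue via Lebesgue-null cones $C_p,C_q$. Both arguments are sound; the paper's is more self-contained since it avoids importing the capacity--Hausdorff dimension comparison, whereas yours is shorter once that fact is granted. One small point worth tightening in your write-up: the cone map $(t,e)\mapsto p+t(e-p)$ is only Lipschitz on bounded $t$-ranges, so the dimension bound $\dim_H C_p\leq n-1$ should be obtained by writing $C_p$ as a countable union of images of $[0,R]\times E$, $R\in\N$.

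For part (2) you correctly spot that the displayed definition $\gamma_\lambda := \Omega\cap\{x_1=\lambda\}$ cannot be right for $n\geq 3$ (it is a relatively open $(n-1)$-dimensional disk and has strictly positive $2$-capacity), but you guess the wrong repair. The intended object is $\gamma_\lambda = \de\Omega\cap\mathit{\Pi}_\lambda = \{x\in\de\Omega:\,x_1=\lambda\}$: this is confirmed both by the paper's own proof of (2), which uses a local $C^\infty$ graph representation of $\de\Omega$ and the transversality of $\mathit{\Pi}_\lambda$ to $\de\Omega$ (forced by convexity and $\lambda>\mathbf a_\Omega$) to exhibit $\gamma_\lambda$ as a smooth $(n-2)$-dimensional manifold for $n\geq 3$, and as exactly two points for $n=2$; and by the later text, which explicitly writes $\gamma_\lambda=\de\Omega\cap\mathit{\Pi}_\lambda$ when constructing the cut-off sequence $\{\phi_h\}$ that vanishes near this ``rim.'' Your proposed set $\Gamma\cup R_\lambda(\Gamma)$ does have vanishing $2$-capacity for the reasons you give (isometry invariance plus subadditivity), but it is the separate sequence $\{\psi_k\}$ that handles $R_\lambda(\Gamma)$; part (2) of the lemma is needed to excise the corner set $\de\Omega\cap\mathit{\Pi}_\lambda$, a role your reinterpretation would leave unaddressed. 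The fix you should supply is therefore the transversal-intersection argument for $\de\Omega\cap\mathit{\Pi}_\lambda$, followed by the Hausdorff-dimension criterion to conclude $\Eucap_2(\gamma_\lambda)=0$.
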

 \begin{proof}
  (1)\,\,First of all we observe that, since the set $E$ has vanishing $2$-capacity,
  for every open neighborhood $\mathcal{O}$ of $E$ one has
  (see Remark \ref{rem.ipotesiH2})
  \begin{equation} \label{eq.assurdoallafine}
   \mathrm{Cap}_2(E,\mathcal{O}) = 0.
  \end{equation}
  Let then $x_0\neq y_0\in U$ be fixed, and let
  $\mathcal{O}_0\subseteq\R^n$ be an open neighborhood of
  $E$ such that $x_0,y_0\notin{\mathcal{O}_0}$.
  Moreover, let $\rho > 0$ be so small that
  \begin{equation} \label{eq.tousetocomplete}
   y_0\notin B(x_0,\rho), \quad B(x_0,\rho)\subseteq U \quad\text{and}\quad
  B(x_0,\rho)\cap \mathcal{O}_0 = \varnothing.
  \end{equation}
  We claim that there exist
  a point $x\in B(x_0,\rho)\subseteq U$ such that
  \begin{equation} \label{eq.CLAIMtopologico}
   \text{the segment $[x,y_0]$ joining $x$ to $y_0$ does not intersect
   $E$.}
  \end{equation}
  \noindent Taking this claim for granted for a moment, we are able to complete
  the demonstration of this assertion: in fact, if $x\in B(x_0,\rho)$
  is as in \eqref{eq.CLAIMtopologico}, the polygonal
  $$c := [x_0,x]\cup[x,y_0]$$
  connects $x_0$ to $y_0$ and it is contained in $U\setminus E$
  (this is a straightforward consequence
  of \eqref{eq.tousetocomplete}, \eqref{eq.CLAIMtopologico} and
  of the fact that, by assumption, $U$ is convex).
  
  We now turn to prove the above claim. To this end, we argue by contradiction
  and we assume that, for every fixed $x\in B(x_0,\rho)$, there exists
  $\overline{t} = \overline{t}_x\in(0,1)$ such that
  \begin{equation} \label{eq.assurdo}
   x+\overline{t}_x(y_0-x)\in E.
   \end{equation}
  If $u\in C_0^\infty(\mathcal{O}_0,\R)$ is any smooth function
  satisfying $u\geq 1$ on $E$, by combining
  \eqref{eq.tousetocomplete} with \eqref{eq.assurdo} we obtain the following
  estimate
  (note that $x\notin\mathcal{O}_0\supset\mathrm{supp}(u)$):
  \begin{equation} \label{eq.tointegrate}
   \begin{split}
    1 & \leq u\big(x+\overline{t}_x(y_0-x)\big) =
   u\big(x+\overline{t}_x(y_0-x)\big) - u(x) \\[0.2cm]
   & \quad = \int_0^{\overline{t}_x}\langle (\nabla u)\big(x+s(y_0-x)\big),y_0-x\rangle\,\d s
   \\[0.2cm]
   & \quad \leq \|y_0-x\|\,\int_0^{\overline{t}_x}\|(\nabla u)\big(x+s(y_0-x)\big)\|\,\d s \\[0.2cm]
   & \quad \big(\text{by H\"older's inequality, and setting
   $\kappa_0 := \|y_0-x_0\|+\rho$}\big) \\[0.2cm]
   & \quad \leq \kappa_0\,
   \bigg(\int_0^{\overline{t}_x}\|(\nabla u)\big(x+s(y_0-x)\big)\|^2\,\d s\bigg)^{1/2}.
   \end{split}
  \end{equation}   
  Due to the arbitrariness of $x\in B(x_0,\rho)$, we are entitled
  to integrate both sides
  of \eqref{eq.tointegrate} on $B(x_0,\rho)$ with respect to $x$:
  this gives (with $\omega_n := |B(0,1)|$)
  \begin{align*}
   \omega_n\,\rho^n & \leq 
   \kappa_0^2\,\int_{B(x_0,\rho)}
   \bigg(\int_0^{\overline{t}_x}\|(\nabla u)\big(x+s(y_0-x)\big)\|^2\,\d s\bigg)\,\d x \\[0.2cm]
   & \leq
   \kappa_0^2\,\int_{\R^n}
   \bigg(\int_0^{1}\|(\nabla u)\big(x+s(y_0-x)\big)\|^2\,\d s\bigg)\,\d x \\[0.2cm]
   & = \kappa_0^2\,
   \int_{\R^n}\|\nabla u\|^2\,\d x.
  \end{align*}
  Since the function $u$ was arbitrary, the above estimate implies that
  $$\inf\bigg\{\int_{\R^n}\|\nabla u\|^2\,\d x:\,\text{$u\in C^\infty_0(\mathcal{O}_0,\R)$
  and $u\geq 1$ on $E$}\bigg\}\geq 
  \frac{\omega_n\,\rho^n}{\kappa_0^2},$$
  but this is in contradiction with \eqref{eq.assurdoallafine}. Thus,
  \eqref{eq.CLAIMtopologico} holds. \medskip
  
  (2)\,\,If $n = 2$, from the convexity
  of $\Omega$ (and the fact that, by assumption, $\lambda > 
  \mathbf{a}_\Omega$) it readily follows that
  $\gamma_\lambda$ consists exactly of two points; as a consequence,
  $$\mathrm{Cap}_2(\gamma_\lambda) = 0.$$
  If, instead, $n\geq 3$, we claim that 
  \begin{equation} \label{eq.gammalambdamanif}
   \text{$\gamma_\lambda$ is a smooth
  $(n-2)$-dimensional manifold.}
  \end{equation} 
  Taking this claim for granted for a moment,
  we are able to complete the proof of the statement: indeed, 
  on account of \eqref{eq.gammalambdamanif},
  it is readily seen that the Hausdorff dimension of $\gamma_\lambda$ is precisely
  $n-2$; as a consequence, we have (see, e.g., \cite{EG})
  $$\mathrm{Cap}_2(\gamma_\lambda) = 0.$$
  We then turn to prove \eqref{eq.gammalambdamanif}. To this end, let
  $\xi\in\gamma_\lambda$ be fixed. Since $\Omega$ is an
  open set of class $C^\infty$
  (see assumption (H.1)), there exist an index $i\in\{1,\ldots,n\}$,
  a number $\rho > 0$ and a map  
  $\theta \in C^\infty(B(\xi',\rho),\R)$ 
  (where $\xi' = (\xi_1,\ldots,\xi_{i-1},\xi_{i+1},\ldots,\xi_n)$) such that
  \begin{equation*}
   \begin{split}  
   & \de\Omega\cap\big((\xi_i-\rho,\xi_i+\rho)\times B(\xi',\rho)\big) = \\[0.2cm]
  & \quad = \big\{x = (x_i,x')\in (\xi_i-\rho,\xi_i+\rho)\times B(\xi',\rho):\,x_i = \theta(x')\big\}.
  \end{split}
  \end{equation*}
  Moreover, since $\Omega$ is convex and $\lambda > \mathbf{a}_\Omega$, it is
  quite easy to recognize that $\theta$ is either convex or concave
  on $B(\xi',\rho)$ and that, setting 
  $g(x) = g(x_i,x') := x_i-\theta(x')$,
  $$\text{$\nabla g(\xi)$ is not parallel to $e_1 = (1,0,\ldots,0)$}.$$
  As a consequence, if we introduce the $\R^2$-valued function
  $$\alpha(x) = \alpha(x_i,x') := \big(x_i-\theta(x'),x_1-\lambda\big)
  = \big(g(x),x_1-\lambda\big)$$
  (with $ x = (x_i,x') = (\xi_i-\rho,\xi_i+\rho)\times B(\xi',\rho)$), we clearly have
  that \medskip
  
  (a)\,\,$\alpha$ is smooth on $\mathcal{U} := (\xi_i-\rho,\xi_i+\rho)\times B(\xi',\rho)$; \medskip
  
  (b)\,\,the Jacobian matrix of $\alpha$ at $\xi$ has full rank; \medskip
  
  (c)\,\,$\gamma_\lambda\cap\mathcal{U} := \{x\in\mathcal{U}:\,\alpha(x) = 0\}$. \medskip
  
  \noindent Gathering together all these facts, we conclude that
  $\gamma_\lambda$ is a smooth manifold
  of dimension $n-2$, and the proof
  is finally complete.
  \end{proof}
  \begin{remark} \label{rem.casofacile}
   We explicitly observe that, on account of
   Lemma \ref{lem.topologico}-(1), we have that
   \begin{equation} \label{eq.SigmamenoRGammaconn}
    \text{$\Sigma_\lambda\setminus R_\lambda(\Gamma)$ is connected
   for every $\lambda\in(\mathbf{a}_\Omega,0)$}.
   \end{equation}
   In fact, since $\Gamma$ fulfills (H.2), we have
   that $R_\lambda(\Gamma)$ is compact and
   $\Eucap_2(R_\lambda(\Gamma)) = 0$ (for e\-ve\-ry $n\geq 2$); moreover, as $\Omega$ is convex,
   the same is true of $\Sigma_\lambda = \Omega\cap\{x_1<\lambda\}$.
   
   Actually, \eqref{eq.SigmamenoRGammaconn} can be proved in a more
   direct (and simpler) way by observing that
   \begin{equation} \label{eq.RGammacontainedplane}
    R_\lambda(\Gamma)\subseteq\{x_1 = 2\lambda\}.
    \end{equation}
   In fact, since $R_\lambda(\Gamma)$ has vanishing $2$-capacity,
   it is well-known that
   $$\mathcal{H}_{\text{dim}}(R_\lambda(\Gamma)) \leq n-2,$$
   where $\mathcal{H}_{\text{dim}}(R_\lambda(\Gamma))$ stands for
   the Hausdorff dimension of $R_\lambda(\Gamma)$ in $\R^n$ (see, e.g., \cite{HKM});
   as a consequence, 
   there necessarily exists (at least) one point 
   \begin{equation} \label{eq.spaziosulpiano}
    \text{$\overline{x}\in\Sigma_\lambda\cap\{x_1 = 2\lambda\}$
   such that $x\notin R_\lambda(\Gamma)$}.
   \end{equation}
   By combining \eqref{eq.RGammacontainedplane} with 
   \eqref{eq.spaziosulpiano} it is very easy to recognize that,
   if $x_0\neq y_0\in\Sigma_\lambda\setminus R_\lambda(\Gamma)$ are arbitrary,
   the polygonal
   $c = [x_0,\overline{x}]\cup [\overline{x},y_0]$ connects
   $x_0$ to $y_0$ and it lays in $\Sigma_\lambda\setminus R_\lambda(\Gamma)$.
  \end{remark}
 Let now $\Gamma\subseteq\Omega$ satisfy
 assumption (H.2), and let 
 $f_1,\ldots,f_m$ be as in assumption (H.3). 
 If $U = (u_1,\ldots,u_m):\Omega\to\R^m$ is any solution
 of the elliptic system \eqref{eq.mainsystem}
 (according to Definition \ref{eq.defisolmain}), we then introduce
 the following functions (defined on $\Omega_\lambda\setminus R_\lambda(\Gamma)$):
 \begin{equation} \label{eq.defiUlambda}
  u_i^{(\lambda)} := u_i\circ R_\lambda\qquad\text{and}\qquad
 U_\lambda := (u_1^{(\lambda)},\ldots,u_m^{(\lambda)}) = U\circ R_\lambda.
 \end{equation}
 On account of Remark \ref{rem.regui}, we clearly have (for every
 $0<\alpha< 1$)
 \begin{equation} \label{eq.regulUlambda}
   U_\lambda\in C^{1,\alpha}(\Omega_\lambda\setminus R_\lambda(\Gamma);\R^m)
   \cap C(\overline{\Omega_\lambda}\setminus R_\lambda(\Gamma);\R^m).
   \end{equation}
  Furthermore, since $U$ solves \eqref{eq.mainsystem}, we have
   \begin{equation} \label{eq.solvedbyUlambda}
    \begin{cases}
     -\Delta u_i^{(\lambda)}
     = f_i(u_1^{(\lambda)},\ldots,u_n^{(\lambda)}), & \text{in 
     $\Omega_\lambda\setminus R_\lambda(\Gamma)$}; \\[0.15cm]
     u_i^{(\lambda)} > 0, & \text{in $\Omega_\lambda\setminus R_\lambda(\Gamma)$}; \\[0.15cm]
     u_i^{(\lambda)} \equiv 0, & \text{on $\de\Omega_\lambda$}.
    \end{cases} 
   \end{equation}      
  We explicitly notice that, since $U_\lambda$ is not of
  class $C^2$, by saying that 
  $u_1^{(\lambda)},\ldots,u_n^{(\lambda)}$  
  solve the (system of) PDEs
  in \eqref{eq.solvedbyUlambda} we mean, precisely, that
  \begin{equation} \label{eq.solveduilambda}
    \int_{\Omega_\lambda} 
    \langle\nabla u_i^{(\lambda)},\nabla\varphi\rangle\,\d x
   = \int_{\Omega_\lambda} f_i(u_1^{(\lambda)},\ldots,
   u_m^{(\lambda)})\,\varphi\,\d x,
   \quad\text{$\forall\,\,\varphi\in C_0^\infty(\Omega_\lambda\setminus R_\lambda(\Gamma);\R)$}.
   \end{equation}
	
	\begin{remark}\label{OnH2}
	As already mentioned in the Introduction, assumption (H.2) is somehow sharp.
	Let us clarify this fact with a couple of examples in the scalar case. \medskip
	
  \noindent {\bf Example 1:} In Euclidean space $\R^n$, let $\Omega := B(0,1)$ and let
  $\Gamma := \overline{B(0,1/2)}$. Since all the boundary points
  of the annulus $\mathcal{O} := \Omega\setminus\Gamma$
  are regular for the Dirichlet problem for $\Delta$,
  there exists a unique function $u\in C^\infty(\mathcal{O},\R)\cap C(\overline{\mathcal{O}},\R)$
  such that
  $$\begin{cases}
   \Delta u = 0 & \text{in $\mathcal{O} = \Omega\setminus\Gamma$}, \\
   u \equiv 0 & \text{on $\de\Omega$}, \\
   u(x) = e^{x_1} & \text{for every $x\in\de B(0,1/2)$}.
  \end{cases}
  $$
  Owing to the classical weak and strong maximum principles, it is readily
  seen that $u > 0$ on $\mathcal{O} = \Omega\setminus\Gamma$; moreover, since $u$ is continuous up
  to $\overline{\mathcal{O}}$ and since $x\mapsto e^{x_1}$ is not 
  even in $x_1$, we infer that $u$ cannot be symmetric with respect
  to the hyperplane $\mathit{\Pi} = \{x_1 = 0\}$. \vspace*{0.12cm}
  
  Summing up, the function $u$ is a solution
  of \eqref{eq.mainsystem} (with $m = 1$ and $f\equiv 0$) which \emph{is not}
  symmetric with respect to the hyperplane  $\{x_1 = 0\}$. Notice
  that both $\Omega$ and $\Gamma$ are symmetric w.r.t.\,$\{x_1 = 0\}$, 
  but $\Gamma$ has not vanishing $2$-capacity
  (since $|\Gamma| > 0$). \medskip
	
  \noindent {\bf Example 2:} 
  In Euclidean space $\R^2$, let $\Omega := B(0,1)$
  and let 
  $\Gamma := \{0\}\times [-1/2,1/2].$
  Moreover, for every fixed $n\geq 2$, we consider the 
  (closed) rectangle
  $$R_n := [-1/n,1/n]\times[-1/2,1/2],$$ 
  and we choose a function $\varphi_n\in \Lip(R_n)$ such that 
  $$\text{$\varphi_n\equiv 1$ on $\{1/n\}\times[-1/2,1/2]$\quad and \quad
  $\varphi_n\equiv 2$ on $\{-1/n\}\times[-1/2,1/2]$.}$$
  Finally, we define $\Omega_n := \Omega\setminus R_n$.
  Since $\Omega_n$ is regular for the Dirichlet problem
  for $\Delta$, it is possible to find
  a unique function $u_n\in C^\infty(\Omega_n,\R)
  \cap C(\overline{\Omega_n},\R)$ such that
  $$\begin{cases}
   \Delta u_n = 0 & \text{in $\Omega_n$}, \\
   u_n\equiv 0 & \text{on $\de\Omega$}, \\
   u_n \equiv \varphi_n & \text{on $\de R_\epsilon$}.
  \end{cases}
  $$
  Furthermore, by the classical weak and strong maximum principles we have 
  \begin{equation} \label{eq.ununifbounded}
   \text{$0\leq u_n\leq 2$ on $\overline{\Omega_n}$ \quad and 
  \quad $u_n > 0$ on $\Omega_n$}.
  \end{equation}
  We claim that the sequence $\{u_n\}_{n}$ has a cluster point
  $u_0$ which is a solution of \eqref{eq.mainsystem}
  (with $m = 1$ and $f\equiv 0$) but which is not symmetric with respect
  to the hyperplane $\{x_1 = 0\}$. \medskip
  
  To prove the claim we first observe that, if $k\in\N$ is arbitrarily fixed and if
  $$\mathcal{O}_k := \big\{
  x\in\Omega\setminus\Gamma:\,d\big(x,\de(\Omega\setminus\Gamma)\big) > 1/k\big\},$$
  there exists a natural $n_k\geq 2$ such that $\overline{\mathcal{O}_k}
  \subseteq\Omega_{n}$ for every $n\geq n_k$.
  As a consequence, since $\{u_n\}_{n\geq n_k}$ is a sequence of harmonic functions
  in $\mathcal{O}_k$ which is uniformly bounded on $\mathcal{O}_k$, there exists
  a harmonic function $u_{0k}$ on $\mathcal{O}_k$ such that
  (up to a sub-sequence)
  $$\lim_{n\to\infty}u_n = u_{0k}, \quad \text{uniformly on every
  compact set of $\mathcal{O}_k$}.$$
  From this, by exploiting a suitable Cantor diagonal argument, it is then possible to find
  a sub-sequence $\{u_{n_j}\}_{j}$ of $\{u_n\}_n$ and a harmonic function
  $u_0$ on $\Omega\setminus\Gamma$ such that
  $$\lim_{j\to\infty}u_{n_j} = u_{0}, \quad \text{uniformly on every
  compact set of $\Omega\setminus\Gamma$}.$$
  In particular, since $u_n\equiv 0$ on $\de\Omega$ 
  and $u_n > 0$ on $\Omega_n$ for every $n\in\N$, we infer that
  $$\text{$u_0\equiv 0$ on $\de\Omega$\quad and \quad
  $u_0\geq 0$ on $\Omega\setminus\Gamma$.}$$
  Let now $n\geq 2$ be arbitrarily fixed, let $P_n := (-1/n,0)$ and let 
  $$B_n^- := B(P_n,1/4)\cap\{x_1<-1/n\}\subseteq\Omega_n.$$ 
  Since $\varphi_n$ is Lipschitz-continuous on $R_n$ and since
  $B_n^-\cap\{x_1 = -1/n\}$ is a Lipschitz portion of $\de B_n^-$, it follows from
  classical results (see, e.g., Theorem~4.11
   in~\cite{GT}) that
  $$|u_n(x)-2| = |u_n(x) - u_n(P_n)| \leq C\,|x_1+1/n|,
  \quad \text{for any $x = (x_1,0)\in B_n^-$},$$
  where $C$ is a suitable positive constant which is \emph{independent of} $n$.
  From this, by letting $n\to\infty$ (and reminding that $u_{n_j}\to u_0$ as $j\to\infty$
  point-wise on $\Omega\setminus\Gamma$) we get
  $$|u_0(x)-2|\leq C|x_1|, \quad \text{for every $x = (x_1,0)\in\Omega\setminus\Gamma$
  with $x_1 < 0$}.$$
  As a consequence, we infer that
  \begin{equation} \label{eq.limitesinistro}
   \exists\,\,\lim_{\begin{subarray}{c}
   x\to 0 \\
   x_1 < 0
  \end{subarray}}u_0(x) = 2.
  \end{equation}
  On the other hand, if $Q_n := (1/n,0)$ and if
  $$B_n^+ := B(Q_n,1/4)\cap\{x_1>1/n\}\subseteq\Omega_n,$$
  by arguing exactly as before we get 
  $$|u_n(x)-1| = |u_n(x) - u_n(Q_n)| \leq C'\,|x_1-1/n|,
  \quad \text{for any $x = (x_1,0)\in B_n^+$},$$
  where $C'$ is another positive constant which is \emph{independent of} $n$.
  From this, by letting $n\to\infty$ and by taking the limit
  as $x\to 0$ with $x_1 > 0$, we obtain
  \begin{equation} \label{eq.limitedestro}
   \exists\,\,\lim_{\begin{subarray}{c}
   x\to 0 \\
   x_1 > 0
  \end{subarray}}u_0(x) = 1.
  \end{equation}
  Gathering together \eqref{eq.limitesinistro} and
  \eqref{eq.limitedestro} we readily see that
  $u_0$ \emph{cannot be} symmetric with respect to the hyperplane
  $\{x_1 = 0\}$; moreover, since $u_0$ is harmonic and non-negative
  on $\Omega\setminus\Gamma$, by the strong maximum principle we conclude that
  $u_0 > 0$ on $\Omega\setminus\Gamma$.\vspace*{0.12cm}
  
  Summing up, $u_0$ is a solution of \eqref{eq.mainsystem}
  (with $m = 1$ and $f\equiv 0$) which is not symmetric
  with respect to the hyperplane $\{x_1 = 0\}$. Note that, even if
  $|\Gamma| = 0$, the set $\Gamma$ cannot have vanishing $2$-capacity: in fact,
  its Hausdorff dimension is strictly greater than
  $n - 2 = 0$.
	\end{remark}
	
   \medskip
   
 After these preliminaries, we continue this section by constructing
 two sequences of functions which shall play a fundamental r\^ole
 in the proof of Theorem \ref{thm.MAIN}. In order to do this, we exploit
 some ideas contained in \cite{EFS} (see, precisely, Section 2).
 
 First of all we observe that, if $\lambda\in(\mathbf{a}_\Omega,0)$ is arbitrarily fixed,
 on account of Lemma \ref{lem.topologico}-(2) we have
 $\Eucap_2(R_\lambda(\Gamma)) = 0$ (both in the case
 $n = 2$ and in the case $n \geq 3$); as a consequence, 
 if $\mathcal{O}\subseteq\RN$ is any open neighborhood of $R_\lambda(\Gamma)$,
 we have 
 \begin{equation} \label{eq.RlGammaopen}
  \Eucap_2\big(R_\lambda(\Gamma),\mathcal{O}\big) = 0.
 \end{equation}
 On account of \eqref{eq.RlGammaopen}, for any
 $k\in\N$ it is possible to find a
 function $\widetilde{\psi}_k\in C_0^\infty(\R^n,\R)$ (also depending
 on the fixed $\lambda$) such that
 \begin{itemize}
 \item $\widetilde{\psi}_k\geq 1$ on $R_\lambda(\Gamma)$ and
  $\mathrm{supp}(\widetilde{\psi}_k)\subseteq\mathcal{O}_k$, where
  $$\mathcal{O}_k = \Big\{x\in\R^n:\,d\big(x,R_\lambda(\Gamma)\big)<2^{-k}\Big\};$$
  \item $\int_{\RN}|\nabla\widetilde{\psi}_k|^2\,\d x \leq {1}/{k}$.
 \end{itemize}
 \medskip
 
 \noindent Starting from the sequence $\{\widetilde{\psi}_k\}_{k\in\N}$, we then define
 \begin{equation} \label{eq.defipsikT}
 \psi_k := T\circ\widetilde{\psi}_k, \qquad\text{where}\,\,
 T(s) := \begin{cases}
 1, & \text{if $s < 0$}, \\
 1-2s, & \text{if $0\leq s\leq 1/2$}, \\
 0, & \text{if $s > 1/2$}.
 \end{cases}
 \end{equation}
 Clearly, $\{\psi_k\}_{k\in\N}\subseteq\Lip(\R^n)$ and, for every fixed
 $k\in\N$, one has
 \begin{equation}\label{eq.propertiespsik}
 0\leq\psi_k\leq 1, \quad
 \text{$\psi_k\equiv 1$ on $\R^n\setminus\mathcal{O}_k$}, \quad
 \text{$\psi_k\equiv 0$ on 
 a small neighborhood of $R_\lambda(\Gamma)$}.
 \end{equation}
 Furthermore, since $\nabla\psi_k = (T'\circ\widetilde{\psi}_k)\cdot
 \nabla\widetilde{\psi}_k$ a.e.\,on $\R^n$, we also have
 \begin{equation} \label{eq.properypsikintegral}
 \int_{\R^n}|\nabla\psi_k|^2\,\d x\leq \frac{4}{k}, \qquad\text{for every $k\in\N$}.
 \end{equation}
 Arguing analogously, we construct a second sequence of functions 
 $\{\phi_h\}_{h\in\N}$ such that, for every $h\in\N$, the function
 $\phi_h$ is identically $0$ near the set
 $$\gamma_\lambda = \de\Omega\cap\mathit{\Pi}_\lambda = \{x\in\de\Omega:\,x_1 = \lambda\}.$$
 To this we first remind that,
 by Lemma \ref{lem.topologico}-(2), we have
 $\Eucap_2(\gamma_\lambda) = 0$; as a consequence,
 for every open neighborhood $\mathcal{V}\subseteq\RN$ of $\gamma_\lambda$ one has
 $$\Eucap_2(\gamma_\lambda,\mathcal{V}) = 0.$$
 On account of this last fact, in correspondence
 to every natural $h$ it is possible
 to construct a function $\widetilde{\phi}_h\in C_0^\infty(\R^n,\R)$
 (also depending on the fixed $\lambda$) such that
  \begin{itemize}
 \item $\widetilde{\phi}_h\geq 1$ on $\gamma_\lambda$ and
  $\mathrm{supp}(\widetilde{\phi}_h)\subseteq\mathcal{V}_h$, where
  $$\mathcal{V}_h = \Big\{x\in\R^n:\,d\big(x,\gamma_\lambda\big)<2^{-h}\Big\};$$
  \item $\int_{\RN}|\nabla\widetilde{\phi}_h|^2\,\d x \leq {1}/{h}$.
 \end{itemize}
 Starting from the sequence $\{\widetilde{\phi}_h\}_{h\in\N}$, we define (as above)
 \begin{equation} \label{eq.defiphihT}
 \phi_h := T\circ\widetilde{\phi}_h, \qquad\text{where $T$ is as in \eqref{eq.defipsikT}.}
 \end{equation}
 Clearly, $\{\phi_h\}_{h\in\N}\subseteq\Lip(\R^n)$ and, for every fixed
 $h\in\N$, one has 
 \begin{equation}\label{eq.propertiesphih}
 0\leq\phi_h\leq 1, \quad
 \text{$\phi_h\equiv 1$ on $\R^n\setminus\mathcal{V}_h$}, \quad
 \text{$\phi_h\equiv 0$ on a small neighborhood of $\gamma_\lambda$},\qquad
 \end{equation}
 Furthermore, since $\nabla\phi_h = (T'\circ\widetilde{\phi}_h)\cdot
 \nabla\widetilde{\phi}_h$ a.e.\,on $\R^n$, we also have
 \begin{equation} \label{eq.properyphihintegral}
 \int_{\R^n}|\nabla\phi_h|^2\,\d x\leq \frac{4}{h}, \qquad\text{for every $h\in\N$}.
 \end{equation}
Having defined the sequences $\{\phi_h\}_{h\in\N}$
 and $\{\psi_k\}_{k\in\N}$, we conclude this section
 by stating some auxiliary results which shall be used
 to prove Theorem \ref{thm.MAIN}.
 To begin with, we state the following Lemmas \ref{lem.cutoffgammalambda} and
 \ref{lem.cutoffRlambda}, which are resemblant of \cite[Lemma 3.1]{EFS}.
 \begin{lemma} \label{lem.cutoffgammalambda}
 Let $\lambda\in (\mathbf{a}_\Omega,0)$ be such that
 $R_\lambda(\Gamma)\cap\overline{\Omega} = \varnothing$, and 
 let $\{\phi_h\}_{h\in\N}$ be the sequence
 defined in \eqref{eq.defiphihT}. 
 Moreover, let $g\in C^1(\Sigma_\lambda,\R)\cap C(\overline{\Sigma_\lambda},\R)$
 be such that
 $$\text{$g\equiv 0$ on $\de\Sigma_\lambda\cap\mathit{\Pi}_\lambda$}
 \qquad\text{and}\qquad
 \text{$g < 0$ on $\de\Sigma_\lambda\setminus\mathit{\Pi}_\lambda$}.$$
 Then, the sequence of functions $\{\varphi_h\}_{h\in\N}$ defined by
 $$\varphi_h(x) := \begin{cases}
 g^+(x)\,\phi^2_h(x), &\text{if $x\in\Sigma_\lambda$}, \\
 0, & \text{if $x\in\R^n\setminus\Sigma_\lambda$},
 \end{cases}$$
 \emph{(}here, $g^+ = \max\{g,0\}$ is the positive part
 of $g$\emph{)} satisfies the following properties:
 \begin{itemize}
  \item[{(i)}] $\{\varphi_h\}_{h\in\N}\subseteq \Lip(\RN)$;
  \item[{(ii)}] $\mathrm{supp}(\varphi_h)\subseteq
  (\Omega\setminus\Gamma)\cap(\Omega_\lambda\setminus R_\lambda(\Gamma))\cap
  \overline{\Sigma_\lambda}$ (for every fixed $h\in\N$);
  \item[{(iii)}] for every $h\in\N$, and a.e.\,on $\Omega\cup\Omega_\lambda$, one has
  \begin{equation} \label{eq.nablavarphihone}
   \nabla\varphi_h = \big[\phi^2_h\,(\mathbf{1}_{\{g > 0\}}\cdot
   \nabla g) + 2\,\phi_h\,g^+\cdot\nabla \phi_h\big]
   \mathbf{1}_{\mathrm{supp}(\varphi_h)}.
\end{equation}   
 \end{itemize}
 In particular, $\varphi_h\in
 \Lip(\overline{\Sigma_\lambda})$ and $\varphi_h\equiv 0$ on $\de\Sigma_\lambda$,
 so that $\varphi_h\in H_0^1(\Sigma_\lambda)$.
 \end{lemma}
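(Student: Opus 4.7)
The plan is to prove the three claims in the order (ii), (i), (iii), since the support condition isolates the region where Lipschitz regularity must be checked and the gradient formula follows from (i) by standard calculus. For (ii), I would decompose $\partial\Sigma_\lambda=(\partial\Omega\cap\{x_1\le\lambda\})\cup(\overline{\Omega}\cap\mathit{\Pi}_\lambda)$ and argue that near every point outside the claimed set, at least one of the factors $g^+$ or $\phi_h^2$ vanishes identically. The set $\Gamma$ is disjoint from $\overline{\Sigma_\lambda}$ because $\Gamma\subseteq\{x_1=0\}$ while $\overline{\Sigma_\lambda}\subseteq\{x_1\le\lambda<0\}$, and the hypothesis $R_\lambda(\Gamma)\cap\overline{\Omega}=\varnothing$ rules out $R_\lambda(\Gamma)$. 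On $\partial\Omega\cap\{x_1<\lambda\}$ the strict inequality $g<0$, combined with continuity on $\overline{\Sigma_\lambda}$, forces $g<0$ on a relative open neighborhood, so $g^+\equiv 0$ there. Near $\gamma_\lambda$, the cutoff $\phi_h$ vanishes by construction. Finally, since $\Sigma_\lambda\subseteq\Omega_\lambda$ by convexity of $\Omega$ and $\partial\Omega_\lambda\cap\overline{\Sigma_\lambda}\subseteq\gamma_\lambda$ by symmetry of $\Omega$ about $\mathit{\Pi}$, the same $\phi_h$-cutoff keeps the support off $\partial\Omega_\lambda$.

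By (ii), the support of $\varphi_h$ is a compact set contained in the open set $\Omega\cap\Omega_\lambda$, and in particular strictly inside the region where $g\in C^1$; so $|\nabla g|$ is uniformly bounded on a neighborhood of the support, and $g^+$ is Lipschitz there with $\nabla g^+=\mathbf{1}_{\{g>0\}}\nabla g$ a.e. Combined with $\phi_h\in\Lip(\R^n)$, this makes $g^+\phi_h^2$ Lipschitz on $\overline{\Sigma_\lambda}$, and by the case analysis of (ii) it vanishes on $\partial\Sigma_\lambda$. The extension by zero of such a function is Lipschitz on all of $\R^n$: for $x\in\overline{\Sigma_\lambda}$ and $y\in\R^n\setminus\overline{\Sigma_\lambda}$, the segment $[x,y]$ meets $\partial\Sigma_\lambda$ at some $z$, and
\[
|\varphi_h(x)|=|\varphi_h(x)-\varphi_h(z)|\le L\,|x-z|\le L\,|x-y|.
\]
This proves (i).

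For (iii), the product and chain rules for Lipschitz functions yield $\nabla\varphi_h=\phi_h^2\,\mathbf{1}_{\{g>0\}}\nabla g+2g^+\phi_h\,\nabla\phi_h$ a.e.\ on $\Sigma_\lambda$. Outside the support, $\varphi_h$ is identically zero and its weak gradient vanishes; this is what the factor $\mathbf{1}_{\mathrm{supp}(\varphi_h)}$ encodes in \eqref{eq.nablavarphihone}. The final conclusion $\varphi_h\in H_0^1(\Sigma_\lambda)$ is then immediate since $\varphi_h\in\Lip(\overline{\Sigma_\lambda})\subseteq H^1(\Sigma_\lambda)$, vanishes on $\partial\Sigma_\lambda$, and $\overline{\Sigma_\lambda}$ is bounded, so it belongs to the closure of $\Lip(\Sigma_\lambda)\cap C_0(\Sigma_\lambda)$ in the $H^1$-norm.

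The main obstacle is in step (i): establishing Lipschitz continuity across the portion of $\partial\Sigma_\lambda$ contained in $\Omega\cap\mathit{\Pi}_\lambda\setminus\gamma_\lambda$, where $\phi_h$ does not vanish and we only have $g\equiv 0$ (no sign information) on $\partial\Sigma_\lambda\cap\mathit{\Pi}_\lambda$. The argument above sidesteps a direct boundary estimate by first using (ii) to place the support of $\varphi_h$ strictly inside the $C^1$-region of $g$, and then invoking the segment-extension principle, thereby trading analytic regularity at the boundary for a geometric/connectivity argument.
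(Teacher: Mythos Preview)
The paper does not supply its own proof of this lemma; it merely states it as ``resemblant of \cite[Lemma~3.1]{EFS}'' and moves on. So there is nothing in the paper to compare against, and your proposal must stand on its own.

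Parts (ii) and (iii) of your argument are fine. The gap is in (i), and it is precisely the obstacle you flag in your final paragraph---but your proposed workaround does not actually close it. You write that by (ii) the support of $\varphi_h$ lies ``strictly inside the region where $g\in C^1$''. This is false: (ii) only gives
\[
\mathrm{supp}(\varphi_h)\subseteq(\Omega\setminus\Gamma)\cap(\Omega_\lambda\setminus R_\lambda(\Gamma))\cap\overline{\Sigma_\lambda},
\]
and that set contains the slab $\Omega\cap\mathit{\Pi}_\lambda$, which sits on $\partial\Sigma_\lambda$, not in $\Sigma_\lambda$. Nothing prevents the closure $\overline{\{g>0\}}$ from meeting $\Omega\cap\mathit{\Pi}_\lambda$, so the support of $\varphi_h$ can genuinely touch this part of the boundary. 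At such points the hypothesis gives only $g\in C(\overline{\Sigma_\lambda})$, not $C^1$, and hence no uniform bound on $|\nabla g|$ nearby; your sentence ``$|\nabla g|$ is uniformly bounded on a neighborhood of the support'' is therefore unjustified. In fact, with the hypotheses exactly as stated one can cook up $g$ behaving like $(\lambda-x_1)^{1/2}$ near $\mathit{\Pi}_\lambda$ so that $g^+\phi_h^2$ is not Lipschitz there.

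The resolution is that in every use of the lemma in this paper one takes $g=u_i-u_i^{(\lambda)}$, which (under $R_\lambda(\Gamma)\cap\overline{\Omega}=\varnothing$ and $\Gamma\subseteq\{x_1=0\}$) is $C^{1,\alpha}$ on a full neighborhood of $\overline{\Sigma_\lambda}$; with that extra regularity your segment-extension argument goes through verbatim. So either read the lemma with the implicit strengthening $g\in C^1(\overline{\Sigma_\lambda})$ (as in \cite{EFS}), or keep the stated hypotheses but drop the claim that the support avoids $\mathit{\Pi}_\lambda$ and instead supply an honest Lipschitz estimate for $g^+$ up to $\mathit{\Pi}_\lambda$---which you cannot do without more information on $g$.
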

  \begin{lemma} \label{lem.cutoffRlambda}
 Let $\lambda\in (\mathbf{a}_\Omega,0)$ be such that
 $R_\lambda(\Gamma)\cap\overline{\Omega} \neq \varnothing$, and 
 let $\{\psi_k\}_{k\in\N},\,\{\phi_h\}_{h\in\N}$ be the sequences
 defined, respectively, in \eqref{eq.defipsikT} and in \eqref{eq.defiphihT}.
 Moreover, let 
 $$g\in C^1(\Sigma_\lambda\setminus R_\lambda(\Gamma),\R)
 \cap C(\overline{\Sigma_\lambda}\setminus R_\lambda(\Gamma),\R),$$
 be such that
 $$\text{$g\equiv 0$ on $(\de\Sigma_\lambda\cap\mathit{\Pi}_\lambda)\setminus R_\lambda(\Gamma)$}
 \qquad\text{and}\qquad
 \text{$g < 0$ on $(\de\Sigma_\lambda\setminus\mathit{\Pi}_\lambda)\setminus R_\lambda(\Gamma)$}.$$
 Then, the (double) sequence of functions $\{\varphi_{h,k}\}_{h,k\in\N}$ defined by
 $$\varphi_{h,k}(x) := \begin{cases}
 g^+(x)\,\phi^2_h(x)\,\psi^2_k(x), &\text{if $x\in\Sigma_\lambda$}, \\
 0, & \text{if $x\in\R^n\setminus\Sigma_\lambda$},
 \end{cases}$$
 satisfies the following properties:
 \begin{itemize}
  \item[{(i)}] $\{\varphi_{h,k}\}_{h\in\N}\subseteq \Lip(\RN)$;
  \item[{(ii)}] $\mathrm{supp}(\varphi_{h,k})\subseteq
  (\Omega\setminus\Gamma)\cap(\Omega_\lambda\setminus R_\lambda(\Gamma))\cap
  \overline{\Sigma_\lambda}$ (for every fixed $h\in\N$);
  \item[{(iii)}] for every $h\in\N$, and a.e.\,on $\Omega\cup\Omega_\lambda$, one has
  \begin{equation} \label{eq.nablavarphihtwo}
   \nabla\varphi_{h,k} = 
   \big[\phi^2_h\,\psi^2_k\,(\mathbf{1}_{\{g > 0\}}\cdot
   \nabla g) + 2\,g^+(\psi^2_k\,\phi_h\cdot\nabla \phi_h
   +\phi^2_h\,\psi_k\cdot\nabla\psi_k)\big]
   \mathbf{1}_{\mathrm{supp}(\varphi_{h,k})}.
\end{equation}   
 \end{itemize}
 In particular, $\varphi_{h,k}\in
 \Lip(\overline{\Sigma_\lambda})$ and $\varphi_{h,k}\equiv 0$ on $\de\Sigma_\lambda$,
 so that $\varphi_{h,k}\in H_0^1(\Sigma_\lambda)$.
 \end{lemma}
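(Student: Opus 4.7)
The proof proposal is to mirror the argument for Lemma \ref{lem.cutoffgammalambda}, with the additional cutoff $\psi_k^2$ playing the role of neutralising the singular set $R_\lambda(\Gamma)$, which now meets $\overline{\Omega}$ and therefore cannot be handled by the simple disjointness exploited in the previous lemma.

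First I would verify item (ii), the support condition, since (i) and (iii) rest on it. The support of $\varphi_{h,k}$ is contained in $\overline{\Sigma_\lambda}$ by definition, and $\overline{\Sigma_\lambda}\subseteq\{x_1\le\lambda\}$ with $\lambda<0$, whereas $\Gamma\subseteq\Omega\cap\mathit{\Pi}$ sits on $\{x_1=0\}$; hence $\overline{\Sigma_\lambda}\cap\Gamma=\varnothing$. Next, on $\de\Sigma_\lambda\setminus\mathit{\Pi}_\lambda$ (which is contained in $\de\Omega$ away from $R_\lambda(\Gamma)$) the hypothesis on $g$ gives $g<0$, so $g^+\equiv 0$ there. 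On $\de\Sigma_\lambda\cap\mathit{\Pi}_\lambda\subseteq\overline{\Omega}\cap\{x_1=\lambda\}$, the closure of this face meets $\de\Omega$ precisely at $\gamma_\lambda$, and $\phi_h$ vanishes on a neighborhood of $\gamma_\lambda$ by \eqref{eq.propertiesphih}. Finally, $\psi_k$ vanishes on a neighborhood of $R_\lambda(\Gamma)$ by \eqref{eq.propertiespsik}. Combining these four observations yields that $\mathrm{supp}(\varphi_{h,k})$ is contained in $\overline{\Sigma_\lambda}$ and stays a positive distance away from $\Gamma$, from $R_\lambda(\Gamma)$, from $\gamma_\lambda$, and from $\de\Omega\setminus\mathit{\Pi}_\lambda$; this places it inside $(\Omega\setminus\Gamma)\cap(\Omega_\lambda\setminus R_\lambda(\Gamma))\cap\overline{\Sigma_\lambda}$.

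For (i), once we know the support stays bounded away from $R_\lambda(\Gamma)$, the function $g$ is actually $C^1$ on an open neighborhood of $\mathrm{supp}(\varphi_{h,k})\cap\Sigma_\lambda$ in $\Sigma_\lambda\setminus R_\lambda(\Gamma)$; since this support is compact, $g$ (and thus $g^+$) is Lipschitz there with a uniform constant. Multiplying by the bounded Lipschitz functions $\phi_h^2$ and $\psi_k^2$ keeps $\varphi_{h,k}$ Lipschitz on $\overline{\Sigma_\lambda}$. The delicate point is continuity of the zero extension across $\de\Sigma_\lambda$, but this is already taken care of: the extension vanishes in a neighborhood of every boundary point (either $g^+\equiv 0$ or $\phi_h\equiv 0$ or $\psi_k\equiv 0$), so extending by $0$ outside $\Sigma_\lambda$ preserves the Lipschitz property on all of $\R^n$. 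Item (iii) then follows by the standard Leibniz rule for Lipschitz functions (valid a.e.): on $\{g>0\}$ both $\phi_h^2$ and $\psi_k^2$ differentiate classically and $\nabla g^+ = \nabla g$, giving the three-term expression displayed in \eqref{eq.nablavarphihtwo}, while on $\{g\le 0\}\cap\Sigma_\lambda$ and outside $\mathrm{supp}(\varphi_{h,k})$ the gradient is zero a.e.

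The final assertion, $\varphi_{h,k}\in H_0^1(\Sigma_\lambda)$, is immediate from (i) and (ii): $\varphi_{h,k}$ is Lipschitz with compact support contained in $\overline{\Sigma_\lambda}$ and vanishing on $\de\Sigma_\lambda$, hence belongs to $\Lip(\overline{\Sigma_\lambda})\cap C_0(\Sigma_\lambda,\R)$, which is a subspace of $H_0^1(\Sigma_\lambda)$ by the definition recalled in Definition \ref{eq.defisolmain}. I expect the only non-routine step is the support analysis in (ii): one must simultaneously control the behavior of $g$ on both components of $\de\Sigma_\lambda$ \emph{and} ensure compatibility with the vanishing of $\phi_h$ on $\gamma_\lambda$ and of $\psi_k$ on $R_\lambda(\Gamma)$, which is exactly why the two cutoff families were constructed in parallel at the end of the section.
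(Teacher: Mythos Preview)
Your proposal is correct and follows the approach the paper implicitly intends (the paper states the lemma without proof, referring to \cite[Lemma~3.1]{EFS}, and your argument is precisely the natural adaptation of that proof with the extra cutoff $\psi_k$). There is one small imprecision in your justification of (i): you assert that ``the extension vanishes in a neighborhood of every boundary point (either $g^+\equiv 0$ or $\phi_h\equiv 0$ or $\psi_k\equiv 0$),'' but this fails on the flat face $\Omega\cap\mathit{\Pi}_\lambda$ of $\de\Sigma_\lambda$ away from $\gamma_\lambda$. There the hypothesis only gives $g=0$, hence $g^+=0$ \emph{at} those points, not on a full neighborhood inside $\Sigma_\lambda$ (and neither $\phi_h$ nor $\psi_k$ need vanish there, since $R_\lambda(\Gamma)\subseteq\{x_1=2\lambda\}$ is disjoint from $\mathit{\Pi}_\lambda$). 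The Lipschitz property of the zero extension still holds, but for a slightly different reason: you have already shown that $g^+\phi_h^2\psi_k^2$ is Lipschitz on the compact \emph{convex} set $\overline{\Sigma_\lambda}$ and vanishes on all of $\de\Sigma_\lambda$; for such a function the zero extension to $\R^n$ is Lipschitz, since for $x\in\Sigma_\lambda$ and $y\notin\overline{\Sigma_\lambda}$ the segment $[x,y]$ meets $\de\Sigma_\lambda$ at some $z$ and $|\varphi_{h,k}(x)|=|\varphi_{h,k}(x)-\varphi_{h,k}(z)|\leq L|x-z|\leq L|x-y|$. With this correction, items (ii), (iii), and the final $H_0^1$ assertion go through exactly as you wrote.
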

 We also have the following regularity result for the solutions
 of \eqref{eq.mainsystem}, which can be demonstrated
 by arguing essentially as in the proof of \cite[Lemma 3.2]{EFS}.
 \begin{lemma} \label{lem.regularitywlambda}
  Let $\lambda\in(\mathbf{a}_\Omega,0)$ and $i\in\{1,\ldots,m\}$ be fixed.
  Then,
  \begin{equation}
   (u_i-u_i^{(\lambda)})^+\in H_0^1(\Sigma_\lambda).
  \end{equation}
  Furthermore, if $\lambda$ is such that $R_\lambda(\Gamma)\cap\overline{\Omega}
  = \varnothing$, then
  \begin{equation}
   \text{$\displaystyle\lim_{h\to\infty}
   \varphi_h = (u_i-u_i^{(\lambda)})^+$ in $H_0^1(\Sigma_\lambda)$},
   \qquad\begin{array}{c}
   \text{where $\varphi_h$ is as in Lemma \ref{lem.cutoffgammalambda},} \\
   \text{with $g = u_i-u_i^{(\lambda)}$}.
   \end{array} 
  \end{equation}
  If, instead, $\lambda$ is such that $R_\lambda(\Gamma)\cap\overline{\Omega}
  \neq \varnothing$, then
  \begin{equation}
   \text{$\displaystyle\lim_{
   \begin{subarray}{c}
   h,\,k\to\infty 
   \end{subarray}}
   \varphi_{h,k} = (u_i-u_i^{(\lambda)})^+$ in $H_0^1(\Sigma_\lambda)$},
   \qquad\begin{array}{c}
   \text{where $\varphi_{h,k}$ is as in Lemma \ref{lem.cutoffRlambda},} \\
   \text{with $g = u_i-u_i^{(\lambda)}$}.
   \end{array} 
  \end{equation}
 \end{lemma}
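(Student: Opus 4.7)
My plan is to reduce the statement to verifying, first, that the function $g := u_i - u_i^{(\lambda)}$ meets the hypotheses of Lemma~\ref{lem.cutoffgammalambda} or~\ref{lem.cutoffRlambda} (depending on which case we are in), and second, that the approximating sequence produced by those lemmas converges in $H^1$ to $(u_i - u_i^{(\lambda)})^+$. Once this convergence is established, the membership $(u_i-u_i^{(\lambda)})^+ \in H_0^1(\Sigma_\lambda)$ follows automatically, because each $\varphi_h$ (respectively $\varphi_{h,k}$) already belongs to $H_0^1(\Sigma_\lambda)$ by the cutoff lemmas and $H_0^1(\Sigma_\lambda)$ is closed under $H^1$-limits.

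To verify the hypotheses I would first note that $\Gamma\cap\overline{\Sigma_\lambda}=\varnothing$, since $\Gamma\subseteq\{x_1=0\}$ while $\overline{\Sigma_\lambda}\subseteq\{x_1\leq\lambda<0\}$; combined with Remark~\ref{rem.regui} this gives $g\in C^{1,\alpha}_{loc}(\Sigma_\lambda\setminus R_\lambda(\Gamma))\cap C(\overline{\Sigma_\lambda}\setminus R_\lambda(\Gamma))$. On $\de\Sigma_\lambda\cap\mathit{\Pi}_\lambda$ the reflection $R_\lambda$ is the identity, hence $g\equiv 0$ there. On the remainder $\de\Sigma_\lambda\setminus\mathit{\Pi}_\lambda\subseteq\de\Omega$, the Dirichlet condition gives $u_i=0$, while convexity and symmetry of $\Omega$ with respect to $\mathit{\Pi}$ force $R_\lambda$ to map $\de\Omega\cap\{x_1<\lambda\}$ into the interior of $\Omega$; away from $R_\lambda(\Gamma)$ this yields $u_i^{(\lambda)}>0$ and hence $g<0$, as required.

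For the convergence in the easier case $R_\lambda(\Gamma)\cap\overline{\Omega}=\varnothing$, both $u_i$ and $u_i^{(\lambda)}$ are continuous on the compact set $\overline{\Sigma_\lambda}$, so $g^+$ is bounded. I would obtain $\varphi_h\to g^+$ in $L^2(\Sigma_\lambda)$ by dominated convergence (using $\phi_h\to 1$ a.e.\ and $0\leq\varphi_h\leq g^+$), and for the gradients I would use the formula~\eqref{eq.nablavarphihone} to split $\nabla\varphi_h$ into two pieces: the first, $\phi_h^2\,\mathbf{1}_{\{g>0\}}\,\nabla g$, converges to $\nabla g^+$ in $L^2$ by dominated convergence (noting that $\overline{\{g>0\}}$ is bounded away from $\de\Omega\setminus\gamma_\lambda$ by continuity, so $g$ is $C^{1,\alpha}$ in a neighborhood of that set); the second, $2\phi_h\,g^+\,\nabla\phi_h$, is controlled by $4\|g^+\|_\infty^2\int|\nabla\phi_h|^2\leq 16\|g^+\|_\infty^2/h$ and therefore vanishes.

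Case~2, where $R_\lambda(\Gamma)\cap\overline{\Omega}\neq\varnothing$, follows the same blueprint using the double cutoff $\phi_h^2\psi_k^2$, taking first $h\to\infty$ at fixed $k$ and then $k\to\infty$. The key new observation is the pointwise inequality $g^+\leq u_i$, which holds because $u_i^{(\lambda)}\geq 0$; it keeps $g^+$ bounded on $\overline{\Sigma_\lambda}$ even though $u_i^{(\lambda)}$ itself may be singular along $R_\lambda(\Gamma)$, and it makes the two cutoff-derivative contributions in~\eqref{eq.nablavarphihtwo} controllable by $\|g^+\|_\infty^2/h$ and $\|g^+\|_\infty^2/k$ respectively. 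The main obstacle will be showing that $\mathbf{1}_{\{g>0\}}\nabla g$ is square-integrable on all of $\Sigma_\lambda$ despite the possible blow-up of $\nabla u_i^{(\lambda)}$ at $R_\lambda(\Gamma)$; this is precisely where the capacity assumption~(H.2) is crucial, since the smallness $\int|\nabla\psi_k|^2\leq 4/k$ combined with the $L^\infty$-bound on $g^+$ supplies a uniform $H^1$-bound on $\{\varphi_{h,k}\}$, from which $L^2$-integrability of $\nabla g^+$ is extracted by weak compactness and by identifying the weak limit (which is forced to be $g^+$ via the pointwise convergence already established).
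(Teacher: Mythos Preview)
Your verification of the hypotheses of Lemmas~\ref{lem.cutoffgammalambda} and~\ref{lem.cutoffRlambda} is correct, and the overall architecture (approximate, pass to the limit, use closedness of $H_0^1$) is sound. However, there is a genuine gap in the convergence step, in both cases: you never use the PDE, and without it the required gradient bounds do not follow.

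In Case~1 you assert that $\overline{\{g>0\}}$ stays away from $\partial\Omega\setminus\gamma_\lambda$ and hence that $g$ is $C^{1,\alpha}$ on a neighborhood of $\overline{\{g>0\}}$. The first claim is fine, but the second is not: nothing prevents $\overline{\{g>0\}}$ from meeting $\gamma_\lambda\subseteq\partial\Omega$, and there the available regularity of $u_i$ and $u_i^{(\lambda)}$ is only continuity up to the boundary, not $C^{1,\alpha}$. So $\mathbf{1}_{\{g>0\}}\nabla g$ is not known a priori to lie in $L^2(\Sigma_\lambda)$, and your dominated-convergence argument for the gradient term is circular. In Case~2 the same circularity appears explicitly: the cutoff estimates $\int|\nabla\phi_h|^2\le 4/h$, $\int|\nabla\psi_k|^2\le 4/k$ together with $\|g^+\|_\infty<\infty$ control only the cross terms $g^+\phi_h\nabla\phi_h$ and $g^+\psi_k\nabla\psi_k$ in~\eqref{eq.nablavarphihtwo}; they give no information whatsoever on $\|\phi_h^2\psi_k^2\,\nabla g^+\|_{L^2}$, which is exactly the quantity you need to bound uniformly.

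The missing ingredient---and this is the heart of the argument in \cite[Lemma~3.2]{EFS}, to which the paper defers, and which the paper itself carries out in Steps~I and~II of the proof of Theorem~\ref{thm.MAIN}---is to test the equation~\eqref{eq.explicitWlambdasolvesPDE} with $\varphi_h$ (resp.\ $\varphi_{h,k}$). Using $c_{ij}\ge 0$ for $j\ne i$, the bound~\eqref{eq.cijboundedabove}, and Young's inequality, one obtains
\[
\tfrac12\int_{\Sigma_\lambda}|\nabla g^+|^2\,\phi_h^2\psi_k^2\,\d x
\;\le\; C\|g^+\|_\infty^2\Big(\tfrac{1}{h}+\tfrac{1}{k}\Big)
\;+\;\mathbf{c}_f\sum_{j}\int_{\Sigma_\lambda}(w_j^{(\lambda)})^+\,g^+\,\d x,
\]
with the obvious one-parameter analogue in Case~1. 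The right-hand side is bounded uniformly in $h,k$ because $0\le g^+\le u_i$ and $0\le (w_j^{(\lambda)})^+\le u_j$ are bounded on $\overline{\Sigma_\lambda}$. Fatou's lemma then yields $\nabla g^+\in L^2(\Sigma_\lambda)$, after which your dominated-convergence and weak-compactness arguments go through and give strong $H^1$-convergence of $\varphi_h$ (resp.\ $\varphi_{h,k}$) to $g^+$.
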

 
Finally, we prove a technical lemma which will be used in the proof
of Theorem \ref{thm.MAIN}.

\begin{lemma}\label{LemmaPoincare}
Let $n \geq 2$ and let $U \subseteq \R^n$ be an open and bounded set with Lipschitz boundary.
There exists a real constant $\Theta=\Theta_n>0$, independent
of $U$, such that
\begin{equation}\label{PerPoincare}
\|u\|_{L^{2}(U)} \leq \Theta\,|U|^{1/n}\,\|\nabla u \|_{L^{2}(U)},
\qquad\text{for every $u \in H^{1}_{0}(U)$}.
\end{equation}
\begin{proof}
We first prove \eqref{PerPoincare} for a function $v \in C^{\infty}_{0}(U)$ 
(not identically vanishing on $U$).
Since, in particular, we can think of $v$ as a function belonging to
$C^{\infty}_{0}(\R^n)$, by applying
the Nash inequality (see, e.g., \cite{Nash}) and H\"{o}lder's inequality we get
\begin{equation*}
\begin{aligned}
\|v\|_{L^{2}(U)}^{1+2/n}&=\|v\|_{L^{2}(\R^n)}^{1+2/n} \leq 
\Theta\,\|v\|_{L^{1}(\R^n)}\,\|\nabla v \|_{L^{2}(\R^n)}\\[0.2cm]
& \leq \Theta\,|U|^{1/n}\,\|v\|_{L^{2}(U)}^{2/n}\,\|\nabla v \|_{L^{2}(U)},
\end{aligned}
\end{equation*}
where $\Theta > 0$ is a real constant only depending on the dimension $n$.
As a consequence, since we have assumed that $v\not\equiv 0$ on $U$, we obtain
$$\|v\|_{L^{2}(U)} \leq \Theta\,|U|^{1/n}\,\|\nabla v \|_{L^{2}(U)}.$$
The proof of \eqref{PerPoincare} for a general
$u\in H^1_0(U)$ follows by a density argument.
\end{proof}
\end{lemma}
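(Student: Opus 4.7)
The plan is to reduce \eqref{PerPoincare} to a test function computation, then derive it by combining the Nash inequality on all of $\R^n$ with a single application of H\"older's inequality that introduces the factor $|U|^{1/n}$. The hypothesis of Lipschitz regularity on $\partial U$ is not really used for the proof itself; it is convenient only because it keeps the density step transparent: by the definition of $H_0^1(U)$ recalled in Definition \ref{eq.defisolmain}, $C_0^\infty(U)$ is dense in $H_0^1(U)$, so it suffices to prove \eqref{PerPoincare} for $v\in C_0^\infty(U)$ and then pass to the limit, both sides of \eqref{PerPoincare} being continuous with respect to the $H^1$-norm.

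For a fixed $v\in C_0^\infty(U)$ with $v\not\equiv 0$, extend $v$ by zero to $\R^n$; the extension lies in $C_0^\infty(\R^n)$ and its $L^p$ and gradient norms computed over $\R^n$ coincide with those over $U$. The (sharp form of) Nash's inequality furnishes a purely dimensional constant $C_n>0$ with
\begin{equation*}
\|v\|_{L^2(\R^n)}^{1+2/n}\leq C_n\,\|v\|_{L^1(\R^n)}^{2/n}\,\|\nabla v\|_{L^2(\R^n)}.
\end{equation*}
Next, I invoke H\"older's inequality on $U$ in the form $\|v\|_{L^1(U)}\leq |U|^{1/2}\|v\|_{L^2(U)}$, and I raise both sides to the power $2/n$ to get $\|v\|_{L^1(U)}^{2/n}\leq |U|^{1/n}\,\|v\|_{L^2(U)}^{2/n}$. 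Substituting this into Nash's inequality and dividing by $\|v\|_{L^2(U)}^{2/n}>0$ yields $\|v\|_{L^2(U)}\leq C_n\,|U|^{1/n}\,\|\nabla v\|_{L^2(U)}$, which is \eqref{PerPoincare} with $\Theta=\Theta_n:=C_n$.

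Finally, passing to the density-closure $C_0^\infty(U)\subseteq H_0^1(U)$ extends the inequality to every $u\in H_0^1(U)$: for an approximating sequence $v_j\to u$ in $H_0^1(U)$, the left- and right-hand sides of \eqref{PerPoincare} converge to the corresponding quantities for $u$, and the constant $\Theta_n$ is stable in the limit. I do not expect any genuine obstacle here: the only place where care is needed is to match exponents so that the final power of $|U|$ is exactly $1/n$. This is enforced by the scale-invariance of \eqref{PerPoincare}: under $x\mapsto\lambda x$, $u(x)\mapsto u(x/\lambda)$, both sides transform as $\lambda^{n/2}$, so any route through Nash plus H\"older must produce the same exponent $1/n$, which provides a useful self-check of the computation.
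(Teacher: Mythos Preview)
Your proof is correct and follows essentially the same route as the paper: reduce to $v\in C_0^\infty(U)$ by density, apply Nash's inequality on $\R^n$, bound $\|v\|_{L^1}$ via H\"older to produce the factor $|U|^{1/n}$, and divide through by $\|v\|_{L^2}^{2/n}$. Your statement of Nash's inequality carries the exponent $2/n$ on $\|v\|_{L^1}$, which is the standard form and is what makes the subsequent H\"older step land on $|U|^{1/n}$; the paper's displayed version omits that exponent (an apparent typo), though its next line is consistent with your version.
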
 

 \section{Proof of Theorem \ref{thm.MAIN}} \label{sec.proofThmMain}
 In the present section we give the proof
 of our Theorem \ref{thm.MAIN}. In doing this,
 we take for granted all the notations introduced in the preceding sections.
 \begin{proof} [Proof (of Theorem \ref{thm.MAIN}).]
 For every $\lambda\in (\mathbf{a}_\Omega,0)$, we consider the functions
 $$w_i^{(\lambda)} := u_i-u_i^{(\lambda)}, \qquad
 W_\lambda := (w_1^{(\lambda)},\ldots,w_n^{(\lambda)}) = U-U_\lambda.$$
 Taking into account the regularity of $U$ and of $U_\lambda$ (see, respectively,
 Definition \ref{eq.defisolmain} and \eqref{eq.regulUlambda}),
 and reminding that $\Sigma_\lambda\subseteq\Omega\cap\Omega_\lambda$,
 it is readily seen that (for any $0<\alpha<1$)
 \begin{equation} \label{eq.regulWlambda}
  W_\lambda\in C^{1,\alpha}(\Sigma_\lambda\setminus R_\lambda(\Gamma);\R^m)
 \cap C(\overline{\Sigma_\lambda}\setminus R_\lambda(\Gamma);\R^m).
 \end{equation}
 Furthermore, since $U$ solves \eqref{eq.mainsystem}
 and $U_\lambda$ solves \eqref{eq.solveduilambda} we have
 (note that, as $\Omega$ is convex, the reflection of
 $\de\Sigma_\lambda\setminus\mathit{\Pi}_\lambda$ with respect to~$\mathit{\Pi}_\lambda$
  is entirely contained in $\Omega$)
 \begin{equation} \label{eq.solvedbyWlambda}
  \begin{cases}
  -\Delta w_i^{(\lambda)} = \displaystyle\sum_{j = 1}^mc_{ij}(x;\lambda)w_j^{(\lambda)}, &\text{on
  $\Sigma_\lambda\setminus R_\lambda(\Gamma)$}, \\[0.15cm]
  w_i^{(\lambda)} < 0, & 
  \text{on $(\de\Sigma_\lambda\setminus\mathit{\Pi}_\lambda)
  \setminus R_\lambda(\Gamma)$},  \\[0.15cm]
  w_i^{(\lambda)}\equiv 0, & \text{on $\de\Sigma_\lambda\cap\mathit{\Pi}_\lambda$},
  \end{cases}
 \end{equation}
 where $c_{i1}(\cdot;\lambda),\ldots,c_{im}(\cdot; \lambda):
 \Sigma_\lambda\setminus R_\lambda(\Gamma)\to\R$ are defined as follows:
 \begin{equation} \label{eq.deficij}
  c_{ij}(x;\lambda) := \begin{cases}
  \displaystyle \frac{f_i\big(U(x)\big)-f_i\big(U_\lambda(x)\big)}
 {u_j(x)-u_j^{(\lambda)}(x)}, & \text{if $u_i(x)\neq u_i^{(\lambda)}(x)$}, \\[0.2cm]
 0, & \text{otherwise}.
 \end{cases}
 \end{equation}
 As for the case of $U$ and $U_\lambda$,
 since $W_\lambda$ is not of class $C^2$ on $\Sigma_\lambda$, by saying that
 $w_1^{(\lambda)},\ldots,w_m^{(\lambda)}$ solve the system of PDEs in 
 \eqref{eq.solvedbyWlambda} we mean, precisely, that
 \begin{equation} \label{eq.explicitWlambdasolvesPDE}
  \int_{\Sigma_\lambda}\langle\nabla w_i^{(\lambda)},\nabla\varphi\rangle\,\d x
  = \sum_{j = 1}^m\int_{\Sigma_\lambda}c_{ij}(\cdot; \lambda)w_j^{(\lambda)}\,\varphi\,\d x,
  \quad \forall\,\,\varphi\in C_0^\infty(\Sigma_\lambda\setminus R_\lambda(\Gamma),\R).
 \end{equation}
 Moreover, on account of assumption (H.3), we see that 
 \begin{itemize}
  \item[(i)] $c_{ij}(\cdot; \lambda) \geq 0$ for every $i\in\{1,\ldots,m\}$ and every
  $j\neq i$;
  \item[(ii)] there exists a real constant $\mathbf{c}_f > 0$
   such that
   \begin{equation} \label{eq.cijboundedabove}
    |c_{ij}(\cdot; \lambda)| \leq \mathbf{c}_f, \qquad\text{for every
    $i,j\in\{1,\ldots,m\}$ and every $\lambda \in (\mathbf{a}_\Omega,0)$}.
   \end{equation}
 \end{itemize}
 According to the well-established moving planes technique, we now define
 \begin{equation} \label{eq.defsetIlambda0}
 \begin{split}
  \mathcal{I} := \big\{
  & \lambda\in(\mathbf{a}_\Omega,0):\,\text{$w_i^{(t)} < 0$ on
  $\Sigma_t\setminus R_t(\Gamma)\,\,\forall\,\,t\in (\mathbf{a}_\Omega,\lambda)$
  and $\forall\,\,i\in\{1,\ldots,m\}$}\big\}, \\[0.15cm]
  & \qquad\qquad\qquad\text{and}\qquad \lambda_0 := \sup\mathcal{I}.
  \end{split}
 \end{equation}
 Our aim is to demonstrate 
 that $\mathcal{I}\neq \varnothing$ and that $\lambda_0 = 0$. From now on,
 in order to ease the readability, we split the proof into some different steps. \medskip
 
 \textsc{Step I:} In this step we prove that $\mathcal{I}\neq\varnothing$ and that
 $\lambda_0 > \mathbf{a}_\Omega$. 
We fix $t_0 \in (\mathbf{a}_\Omega,0)$ such that $R_{t_0}(\Gamma) \subset \Omega^{c}$. Necessarily,
we have that $R_{t}(\Gamma) \subset \Omega^{c}$ for every $t \in (\mathbf{a}_{\Omega},t_{0})$.
Now, for every $i=1,\ldots,m$ we consider the function $\varphi_{i,h}:\Omega \to \mathbb{R}$
defined as
$$\varphi_{i,h} := (w_{i}^{(t)})^{+} \phi^{2}_{h} \mathbf{1}_{\Sigma_{t}},$$
\noindent where $\{\phi_{h}\}_{h\in \mathbb{N}}$ is as in \eqref{eq.defiphihT}. By density, we can use $\varphi_{i,h}$ as 
a test function, finding
\begin{equation*}
\dfrac{1}{2}\int_{\Sigma_{t}} |\nabla (w_{i}^{(t)})^{+}|^2 \phi_{h}^2 \,\d x 
\leq \mathbf{c}_f \sum_{j=1}^{m}\int_{\Sigma_{t}}(w_{j}^{(t)})^{+} (w_{i}^{(t)})^{+} \phi_h^2 \,\d x + 2 \int_{\Sigma_{t}}(w_{i}^{(t)})^2 |\nabla \phi_{h}|^2 \,\d x.
\end{equation*}
By Fatou Lemma, sending $h \to 0^+$ we get
\begin{equation*}
\dfrac{1}{2}\int_{\Sigma_{t}} |\nabla (w_{i}^{(t)})^{+}|^2 \, dx \leq \mathbf{c}_f \sum_{j=1}^{m}\int_{\Sigma_{t}}(w_{j}^{(t)})^{+} (w_{i}^{(t)})^{+} \,\d x.
\end{equation*}
By H\"{o}lder inequality on every term on the right hand side, we get
\begin{equation*}
\dfrac{1}{2}\|\nabla (w_{i}^{(t)})^{+}\|_{L^{2}(\Sigma_{t})}^2 \,\d x 
\leq \mathbf{c}_f \sum_{j=1}^{m} \|(w_{j}^{(t)})^{+}\|_{L^{2}(\Sigma_t)} 
\|(w_{i}^{(t)})^{+}\|_{L^{2}(\Sigma_t)}.
\end{equation*}
{F}rom this, by using \eqref{PerPoincare} (on every term on the right hand side),
for every 
$t\in (\mathbf{a}_\Omega,t_0)$ and every
index $i\in\{1,\ldots,m\}$ we get
\begin{equation*}
\dfrac{1}{2}\|\nabla (w_{i}^{(t)})^{+}\|_{L^{2}(\Sigma_{t})}^2 \,\d x \leq 
\mathbf{c}_f\,\theta_n^{2}(\Sigma_{t})\,\sum_{j=1}^{m} 
\|\nabla (w_{j}^{(t)})^{+}\|_{L^{2}(\Sigma_t)} 
\|\nabla (w_{i}^{(t)})^{+}\|_{L^{2}(\Sigma_t)},
\end{equation*}
\noindent where we have introduced the notation
 (repeatedly used in the sequel)
 $$\theta_n (\Sigma_{t}) := \Theta\,|\Sigma_{t}|^{1/n}
 \quad \text{(with $\Theta > 0$ is as in Lemma \ref{LemmaPoincare}).}$$ 
 Now, if $\|\nabla (w_{i}^{(t)})^{+}\|_{L^{2}(\Sigma_{t})}\neq 0$,
 from the above inequality we immediately get
 \begin{equation}\label{eq.BaseInduzione}
 \dfrac{1}{2}\|\nabla (w_{i}^{(t)})^{+}\|_{L^{2}(\Sigma_{t})}
 \leq \mathbf{c}_f\,\theta_n^{2}(\Sigma_{t})\,\sum_{j=1}^{m} 
 \|\nabla (w_{j}^{(t)})^{+}\|_{L^{2}(\Sigma_t)}.
 \end{equation}
 On the other hand, since \eqref{eq.BaseInduzione}
 is trivially satisfied when $\|\nabla (w_{i}^{(t)})^{+}\|_{L^{2}(\Sigma_{t})} = 0$,
 we conclude that such an inequality holds true
 for every $i\in\{1,\ldots,m\}$ and every $t\in(\mathbf{a}_\Omega,t_0)$. 
 
 We now aim at proving the following
 assertion: \emph{for every fixed
 $k\in\{1,\ldots,m-1\}$ there exist $t_k \in (\mathbf{a}_{\Omega},t_0)$ and a real constant
 $C_k=C_k(m,\mathbf{c}_f) > 0$  such that}
 \begin{equation}\label{eq:Claim}
 \|\nabla (w_{i}^{(t)})^+\|_{L^{2}(\Sigma_t)} \leq 
 C_k\,\theta^{2}_{n}(\Sigma_{t}) \sum_{j\geq i+1} \|\nabla 
 (w_{j}^{(t)})^+\|_{L^{2}(\Sigma_t)}, \quad
 \begin{array}{c}
 \text{for all $1\leq i\leq k$ and} \\[0.1cm]
 \text{every $t\in(\mathbf{a}_\Omega,t_k)$}.
 \end{array}
\end{equation}
To prove \eqref{eq:Claim} we argue by 
(finite) induction and we start with $k=1$.
By \eqref{eq.BaseInduzione} we have
\begin{equation*}
\left(\frac{1}{2}- \mathbf{c}_f\,\theta^2_{n}(\Sigma_t)\right)
\|\nabla (w_{1}^{(t)})^{+}\|_{L^{2}
(\Sigma_{t})}
\leq \mathbf{c}_f\,\theta_n^2(\Sigma_{t})\,\sum_{j\geq 2} 
\|\nabla (w_{j}^{(t)})^{+}\|_{L^{2}(\Sigma_t)}.
\end{equation*}
Since $\theta_n(\Sigma_t) \to 0$ as $t \to \mathbf{a}_{\Omega}$, it is possible
to find $t_1 \in (\mathbf{a}_{\Omega}, t_0)$ such that 
$$\displaystyle\frac{1}{2}- \mathbf{c}_f\,\theta^2_{n}(\Sigma_t) > \frac{1}{4}.\quad\text{for
every $t\in(\mathbf{a}_\Omega,t_1)$}.$$
As a consequence, we obtain
\begin{equation*}
\|\nabla (w_{1}^{(t)})^{+}\|_{L^{2}(\Sigma_{t})}\leq 4 \mathbf{c}_{0}\,
\theta^{2}_{n}(\Sigma_t)\,\sum_{j\geq 2} \|\nabla (w_{j}^{(t)})^{+}\|_{L^{2}(\Sigma_t)},
\end{equation*}
 which is precisely \eqref{eq:Claim} for $i=1$
(with $C_1 = 4\mathbf{c}_0$).
 Let us now suppose that \eqref{eq:Claim} holds for a certain index 
 $k \in \{1, \ldots, m-2\}$ and,
 by shrinking $t_k$ if necessary, let us also assume that $\theta_n(\Sigma_t) < 1$
 for all $t\in(\mathbf{a}_\Omega,t_k)$.
 Owing to \eqref{eq.BaseInduzione} (with $i=k+1$), we then have 
\begin{equation}\label{eq.StimaWk+1}
\begin{split}
 &\left(\frac{1}{2}- \mathbf{c}_f\,\theta^2_{n}(\Sigma_t)\right)
 \|\nabla (w_{k+1}^{(t)})^{+}\|_{L^{2}(\Sigma_{t})} 
 \leq \mathbf{c}_f\,\theta_n^{2}(\Sigma_{t})
 \sum_{i\neq k+1} \|\nabla (w_{i}^{(t)})^{+}\|_{L^{2}(\Sigma_t)} \\[0.2cm]
 & \quad = \mathbf{c}_f\,\theta^{2}_{n}(\Sigma_t)
 \bigg(\sum_{i = 1}^k\|\nabla (w_{i}^{(t)})^{+}\|_{L^{2}(\Sigma_{t})}
 + 
 \sum_{i\geq k+2}\|\nabla (w_{i}^{(t)})^{+}\|_{L^{2}(\Sigma_{t})}\bigg) \\[0.2cm]
 &\quad \big(\text{by \eqref{eq:Claim},
 which we are assuming to hold for the index $k$}\big)\\[0.2cm]
 &\quad \leq \mathbf{c}_f\,\theta^{2}_{n}(\Sigma_t)\bigg(
 C_k\,\sum_{i = 1}^k\sum_{j \geq i+1}
 \|\nabla (w_{j}^{(t)})^{+}\|_{L^{2}(\Sigma_{t})}
 + \sum_{i\geq k+2}\|\nabla (w_{i}^{(t)})^{+}\|_{L^{2}(\Sigma_{t})}\bigg) \\[0.2cm]
 & \quad \leq
 \mathbf{c}_f\,\theta^{2}_{n}(\Sigma_t)\bigg(
 m\,C_k\,\sum_{j = 2}^{k+1}\|\nabla (w_{j}^{(t)})^{+}\|_{L^{2}(\Sigma_{t})}\,+
 \\[0.2cm] 
 & \quad\qquad\qquad\qquad\qquad
 + (k\,C_k+1)\,\sum_{i\geq k+2}\|\nabla (w_{i}^{(t)})^{+}\|_{L^{2}(\Sigma_{t})}\bigg)
 =: (\star),
\end{split}
\end{equation}
 We now perform a backward induction
 argument to show that, as a consequence of the validity of
 \eqref{eq:Claim} for the index $k$, the following fact holds:
 \emph{for every fixed $j\in\{1,\ldots,k\}$, it is possible to find
 a real constant
 $\mathcal{C}_j = \mathcal{C}_j(m,k,\mathbf{c}_f)>0$ such that}
\begin{equation}\label{eq.Claim2}
\|\nabla (w_{j}^{(t)})^{+}\|_{L^{2}(\Sigma_{t})} \leq 
\mathcal{C}_j\,\theta^2_n (\Sigma_t)
\sum_{r\geq k+1}\|\nabla (w_{r}^{(t)})^{+}\|_{L^{2}(\Sigma_{t})},
\quad
 \begin{array}{c}
 \text{for all $1\leq j\leq k$ and} \\[0.1cm]
 \text{every $t\in(\mathbf{a}_\Omega,t_k)$}.
 \end{array}
\end{equation}
 For $j=k$, \eqref{eq.Claim2} follows immediately from \eqref{eq:Claim} 
 by taking $i=k$ (with $\mathcal{C}_k := C_k$). 
 We then suppose the existence of 
 an index $j\in \{2,\ldots,k\}$ such that
 \eqref{eq.Claim2} holds for every $j\leq r\leq k$,
 and we exploit once again
 \eqref{eq:Claim} (with $i=j-1\leq k-1$): this gives
\begin{equation*}
\begin{split}
& \|\nabla (w_{j-1}^{(t)})^{+}\|_{L^{2}(\Sigma_{t})} \leq 
C_k\,\theta^{2}_{n}(\Sigma_t) \sum_{r \geq j} \|\nabla (w_{r}^{(t)})^{+}\|_{L^{2}(\Sigma_t)}
\\
& \qquad\quad =  C_k\,\theta^{2}_{n}(\Sigma_t)\bigg(
\sum_{r = j}^{k}\|\nabla (w_{r}^{(t)})^{+}\|_{L^{2}(\Sigma_t)}
+\sum_{r \geq k+1} \|\nabla (w_{r}^{(t)})^{+}\|_{L^{2}(\Sigma_t)}\bigg)\\[0.2cm]
& \qquad\quad \big(\text{since \eqref{eq.Claim2} holds for $j\leq r\leq k$, and
$\theta_n(\Sigma_t)<1$}\big) \\[0.2cm]
& \qquad\quad \leq
C_k\,\big(m\max_{j\leq r\leq k}(C_r)+1\big)\,\theta_n^2(\Sigma_t)\,
\sum_{r\geq k+1}\|\nabla (w_{r}^{(t)})^{+}\|_{L^{2}(\Sigma_{t})},
\end{split}
\end{equation*} 
\noindent so that \eqref{eq.Claim2} holds true also for $j-1$.
By the Induction Principle, we then conclude that
estimate \eqref{eq.Claim2} is valid for every $j = 1,\ldots,k$, as claimed. \vspace*{0.1cm} 

With \eqref{eq.Claim2} at hand, we now continue
the estimate \eqref{eq.StimaWk+1}: reminding that,
by the choice of $t_k$, we have
$\theta_n(\Sigma_t) < 1$ for every $t\in(\mathbf{a}_\Omega,t_k)$, we have
\begin{equation}\label{eq.RHS2}
(\star) \leq M_k\,\theta_n^2(\Sigma_t)
\sum_{j\geq k+1}\|\nabla (w_{j}^{(t)})^{+}\|_{L^{2}(\Sigma_{t})},
\end{equation}
 where $M_k = M_k(m,\mathbf{c}_f) > 0$ is a suitable
As a consequence, we obtain
\begin{equation*}
\left(\frac{1}{2}-\mathbf{c}_f 
\theta_n^2(\Sigma_t) - M_k\,\theta_n^2(\Sigma_t)\right)\|\nabla(w_{k+1}^{(t)})^{+}\|_{L^{2}(\Sigma_t)} 
\leq  M_k\,\theta_n^2(\Sigma_t)\sum_{j\geq k+2}\|\nabla (w_{j}^{(t)})^{+}\|_{L^{2}(\Sigma_{t})}.
\end{equation*}
Finally, since $\theta_n(\Sigma_t)\to 0$ as $t \to \mathbf{a}_{\Omega}$, we infer the existence
 of $\bar{t}\in (\mathbf{a}_{\Omega},t_0)$ such that
 $$\frac{1}{2}-\mathbf{c}_f 
\theta_n^2(\Sigma_t) - M_k\,\theta_n^2(\Sigma_t) > 
 \frac{1}{4} \quad \text{for every $t \in (\mathbf{a}_\Omega, \bar{t}$)};$$
 from this, we obviously derive the estimate (valid for $t\in(\mathbf{a}_\Omega,\bar{t})$)
$$\|\nabla(w_{k+1}^{(t)})^{+}\|_{L^{2}(\Sigma_t)} 
\leq  4M_k\,\theta_n^2(\Sigma_t)\sum_{j\geq k+2}\|\nabla (w_{j}^{(t)})^{+}\|_{L^{2}(\Sigma_{t})}.$$
Taking as $t_{k+1}:= \min \{t_k,\bar{t}\}$, 
and setting $C_{k+1} := \max\{C_k,4M_k\}$,
we then obtain
$$\|\nabla(w_{i}^{(t)})^{+}\|_{L^{2}(\Sigma_t)} 
\leq  C_{k+1}\,\theta_n^2(\Sigma_t)\sum_{j\geq i+1}\|\nabla (w_{i}^{(t)})^{+}\|_{L^{2}(\Sigma_{t})}
\quad 
 \begin{array}{c}
 \text{for all $1\leq i\leq k+1$ and} \\[0.1cm]
 \text{every $t\in(\mathbf{a}_\Omega,t_{k+1})$},
 \end{array}$$
 so that \eqref{eq:Claim} holds true also for $k+1$.  By the Induction Principle, we conclude that
estimate \eqref{eq:Claim} is valid for every $k = 1,\ldots,m-2$, as claimed. \vspace*{0.1cm} 

 Now we have established \eqref{eq:Claim}, we are able to complete the proof 
 this step. In fact, since the cited \eqref{eq:Claim} holds true for $k = m-1$, a (finite)
 backward induction argument shows the existence
 of a real constant $\mathcal{C}_m = \mathcal{C}_m(\mathbf{c}_0) > 0$ such that
 \begin{equation}\label{eq.Claim2finale}
  \|\nabla (w_{j}^{(t)})^{+}\|_{L^{2}(\Sigma_{t})} \leq 
  \mathcal{C}_m\,\theta^2_n (\Sigma_t)\,\|\nabla (w_{m}^{(t)})^{+}\|_{L^{2}(\Sigma_{t})},
   \quad 
 \begin{array}{c}
 \text{for all $1\leq j\leq m-1$ and} \\[0.1cm]
 \text{every $t\in(\mathbf{a}_\Omega,t_{m-1})$};
 \end{array}
\end{equation}
  gathering together
 \eqref{eq.Claim2finale} and \eqref{eq.BaseInduzione}
 (with $i = m$),
 for any $t\in(\mathbf{a}_\Omega,t_{m-1})$ we get
 $$\dfrac{1}{2}\|\nabla (w_{m}^{(t)})^{+}\|_{L^{2}(\Sigma_{t})}\leq
 \mathbf{c}_f\,\theta_n^{2}(\Sigma_{t})\,(m\,\mathcal{C}_m\,\theta^2_n(\Sigma_t)+1)\,
 \|\nabla (w_{m}^{(t)})^{+}\|_{L^{2}(\Sigma_t)}.$$
 Since $\theta_n(\Sigma_t)\to 0$ as $t\to \mathbf{a}_\Omega$, there exists
 $\tau_0\in(\mathbf{a}_\Omega,t_{m-1})$ such that
 $$\mathbf{c}_f\,\theta_n^{2}(\Sigma_{t})\,(m\,\mathcal{C}_m\,\theta^2_n(\Sigma_t)+1)
 < \frac{1}{4},\quad \text{for every $t\in(\mathbf{a}_\Omega,\tau_0)$};$$
 as a consequence, we obtain
 $$\|\nabla (w_m^{(t)})\|_{L^{2}(\Sigma_t)} = 0,\quad
 \text{for every $t\in(\mathbf{a}_\Omega,\tau_0)$}.$$
 On account of \eqref{eq:Claim}, this proves that 
 \begin{equation*} 
  \|\nabla (w_1^{(t)})\|_{L^{2}(\Sigma_t)} = \cdots
 = \|\nabla (w_m^{(t)})\|_{L^{2}(\Sigma_t)} = 0,\quad
 \text{for every $t\in(\mathbf{a}_\Omega,\tau_0)$};
 \end{equation*}
 as a consequence, by Lemma \ref{LemmaPoincare} 
 (and since $W_t$ is continuous on $\Sigma_t$) we get 
 \begin{equation} \label{eq.daRichiamareinStepII}
  \text{$u_{i} - u_{i}^{(t)} = w_i^{(t)}\leq 0$ 
  on $\Sigma_t$ \quad (for every $i = 1,\ldots,m$ and every
 $t\in(\mathbf{a}_\Omega,\tau_0)$).}
 \end{equation} 
 We finally claim that,
 by the Strong Maximum Principle for $C^1$-subsolutions, we have
 \begin{equation} \label{eqwistrictlynegativeStepI}
  \text{$u_i < u_i^{(t)}$ on $\Sigma_t$}, \quad \text{for every
  $i = 1,\ldots,m$ and every $t\in (\mathbf{a}_\Omega,\tau_0)$}.
 \end{equation}
  Indeed, let $i\in\{1,\ldots,m\}$ and $t\in(\mathbf{a}_\Omega,\tau_0)$
  be arbitrarily fixed. Clearly,
 the set $\Sigma_t$ is
 (open and) connected;
 moreover, since the (vector-valued) map $W_t = U-U_t$
 solves \eqref{eq.solvedbyWlambda} and 
 $c_{ij}(\cdot; t)\geq 0$ for every $j\neq i$,
 we have 
 \begin{align*}
  -\Delta w_i^{(t)} = \sum_{j = 1}^m c_{ij}(\cdot; t)w_j^{(t)}
  \leq c_{ii}(\cdot; t)\,w_i^{(t)}\quad( \text{as $u_i\leq u_i^{(t)}$} ).
 \end{align*}
 We explicitly point out that the above inequality has to be intended
 in the weak sense of distributions
 on $\Sigma_t$: this means, precisely, that
 $$\int_{\Sigma_{t}}\langle\nabla w_i^{(t)},\nabla\varphi\rangle\,\d x
 \leq \int_{\Sigma_{t}}c_{ii}(\cdot;t)\,w_i^{(t)}\,\d x,
 \quad\text{$\forall\,\,\varphi\in C_0^\infty(\Sigma_{t},\R)$ with $\varphi\geq 0$ on
 $\Sigma_t$.}$$
 From this, taking into account \eqref{eq.cijboundedabove} we get
 \begin{equation*}
  -\Delta w_i^{(t)} + \big(\mathbf{c}_f-c_{ii}(\cdot; t)\big)w_i^{(t)}
 \leq \mathbf{c}_fw_i^{(t)}\leq 0,
 \end{equation*}
 and $\mathbf{c}_f-c_{ii}(\cdot; t)\geq 0$ on $\Sigma_{t}$.
 Gathering together all these facts, 
 we can invoke the Strong Maximum Principle
  for $C^1$-subsolution (see, e.g., \cite{GT}), ensuring that
  $$\text{either $w_i^{(t)} < 0$ or
  $w_i^{(t)}\equiv 0$ on $\Sigma_{t}$}.$$
  Since, by \eqref{eq.solvedbyWlambda}, we know that
 the function $w_{t}^{(t)}$ is (strictly) negative
 on the set $\de\Sigma_{t}\setminus\mathit{\Pi}_{t}$
 (notice $t < \tau_0 < 0$),
 we then conclude that \eqref{eqwistrictlynegativeStepI} holds true. \vspace*{0.12cm}
 
 Finally, on account of \eqref{eqwistrictlynegativeStepI} (and taking into account
 the very definition of $\mathcal{I}$), we see that $(\mathbf{a}_\Omega,\tau_0)\subseteq\mathcal{I}$,
 whence $\mathcal{I}\neq\varnothing$, and 
 that $\lambda_0 = \sup\mathcal{I} \geq \tau_0 > \mathbf{a}_\Omega$.
\medskip

 \textsc{Step II:} We now turn to demonstrate that $\lambda_0 = 0$. To this end,
 following \cite{EFS}, we argue by contradiction and we assume that
 $\lambda_0 \in (\mathbf{a}_\Omega,0)$.
 Since $W_{\lambda_0}$ is continuous on $\Sigma_{\lambda_0}\setminus R_{\lambda_0}(\Gamma)$,
 from the very definition of $\lambda_0$ we deduce that,
 for every $i\in\{1,\ldots,m\}$,
 \begin{equation} \label{eq.winonpositivelambda0}
  \text{$w_i^{(\lambda_0)}\leq 0$ on $\Sigma_{\lambda_0}\setminus R_{\lambda_0}(\Gamma)$, that is,
 $u_i\leq u_{\lambda_0}$ on $\Sigma_{\lambda_0}\setminus R_{\lambda_0}(\Gamma)$.}
 \end{equation}
 As a consequence, by the Strong Maximum Principle
 (for $C^1$-subsolutions) we get
 \begin{equation} \label{eq.wistrctlynegative}
  \text{$w_i^{(\lambda_0)} < 0$ on $\Sigma_{\lambda_0}\setminus R_{\lambda_0}(\Gamma)$}, 
  \qquad\text{for every $i\in\{1,\ldots,m\}$}.
 \end{equation}
 In fact, taking into account that
 $W_{\lambda_0}$ solves 
 \eqref{eq.solvedbyWlambda}
 and arguing exactly as in the last part of the previous step, we have
 the following family of inequalities (which has to be indended
 in the weak sense of distributions on 
 $\Sigma_{\lambda_0}\setminus R_{\lambda_0}(\Gamma)$):
 \begin{equation} \label{eq.solvedbywilambdaperHopf}
  -\Delta w_i^{(\lambda_0)} + \big(\mathbf{c}_f-c_{ii}(\cdot;\lambda_0)\big)w_i^{(\lambda_0)}
 \leq \mathbf{c}_fw_i^{(\lambda_0)}\leq 0 \quad \text{for every $i\in\{1,\ldots,m\}$}.
 \end{equation}
 Moreover, since $\mathbf{c}_f-c_{ii}(\cdot;\lambda_0)\geq 0$ 
 on $\Sigma_{\lambda_0}\setminus R_{\lambda_0}(\Gamma)$
 (see \eqref{eq.cijboundedabove}) and since, by 
 Lemma \ref{lem.topologico}-(1),
 the set $\Sigma_{\lambda_0}\setminus R_{\lambda_0}(\Gamma)$ is
 open and connected (see also Remark \ref{rem.casofacile}), 
 we are entitled to apply the Strong Maximum Principle
  for $C^1$-subsolution: this gives
  $$\text{either $w_i^{(\lambda_0)} < 0$ or
  $w_i^{(\lambda_0)}\equiv 0$ on $\Sigma_{\lambda_0}\setminus R_{\lambda_0}(\Gamma)$}\qquad
  (\text{for any $i\in\{1,\ldots,m\}$}).$$
  Finally, since we know that
 the functions $w_{1}^{(\lambda_0)},\ldots,w_{m}^{(\lambda_0)}$ are (strictly) negative
 on the set $(\de\Sigma_{\lambda_0}\setminus\mathit{\Pi}_{\lambda_0})\setminus R_{\lambda_0}(\Gamma)$
 (as $\lambda_0 < 0$, see \eqref{eq.solvedbyWlambda}),
 we conclude that \eqref{eq.wistrctlynegative} holds true. \medskip
 
 Now we have established \eqref{eq.wistrctlynegative},
 we then turn to prove the following 
 assertion:
 \emph{in cor\-re\-spon\-den\-ce 
 to every compact set $K\subseteq \Sigma_{\lambda_0}\setminus R_{\lambda_0}(\Gamma)$
 with Lipschitz boundary $\de K$,
 it is possible to find a small $\epsilon = \epsilon(K,\lambda_0) \in (0,|\lambda_0|/2)$ 
 such that}
 \begin{itemize}
  \item[(a)] $K\subseteq \Sigma_\lambda\setminus R_\lambda(\Gamma)$ 
  for every
  $\lambda\in [\lambda_0,\lambda_0+\epsilon]$;
  \item[(b)] $(w_i^{(\lambda)})^+\equiv 0$ on $K$ for every $i\in\{1,\ldots,m\}$
  and every $\lambda\in(\lambda_0,\lambda_0+\epsilon]$;
  \item[(c)] for every
  $i\in\{1,\ldots,m\}$ and every $\lambda\in(\lambda_0,\lambda_0+\epsilon]$
  we have \phantom{$w_i^{(\lambda)}$}
  \begin{equation} \label{eq.toprovebeforechooseK}
   \|\nabla (w_i^{(\lambda)})^+\|_{L^2(\Sigma_\lambda\setminus K)}
   \leq \mathbf{c}_f\,\theta^2_n\big(\Sigma_\lambda\setminus K\big)\,\sum_{j = 1}^m
   \|\nabla (w_j^{(\lambda)})^+\|_{L^2(\Sigma_\lambda\setminus K)},
  \end{equation}
  where
  $\theta_n\big(\Sigma_\lambda\setminus K\big)
  = \Theta\,\big|\Sigma_\lambda\setminus K\big|^{1/n}$ (see Lemma \ref{LemmaPoincare}).
 \end{itemize}
 We explicitly observe that, if $\epsilon<|\lambda_0|/2$, we have
 $$[\lambda_0,\lambda_0+\epsilon]\subseteq (\mathbf{a}_\Omega,0).$$
 Let now $K\subseteq\Sigma_{\lambda_0}\setminus R_{\lambda_0}(\Gamma)$
  be an ar\-bi\-tra\-ri\-ly fixed compact set. Since both $K$ and $R_{\lambda_0}(\Gamma)$ are closed,
  it is very easy to recognize that
  there exists a suitable $\nu = \nu(K,\lambda_0) > 0$, which we can assume
  to be smaller than $|\lambda_0|/2$, such that
  \begin{equation} \label{eq.KinSigmalambdanu}
  K\subseteq \Sigma_\lambda\setminus R_\lambda(\Gamma), \qquad\text{for
  every $\lambda\in[\lambda_0,\lambda_0+\nu]$}.
  \end{equation}
  Moreover,
  on account of \eqref{eq.wistrctlynegative} (and remembering
  that $W_{\lambda_0}$ is continuous on $\Sigma_{\lambda_0}\setminus R_{\lambda_0}(\Gamma)$),
  it is possible to find a real constant $M_0 < 0$ such that
  \begin{equation} \label{eq.wilessM0K}
  \text{$w_i^{(\lambda_0)}\leq M_0 < 0$ on 
  $\Sigma_{\lambda_0}\setminus R_{\lambda_0}(\Gamma)$}, \qquad\text{for
  every $i\in\{1,\ldots,m\}$}.
  \end{equation}
  Since, for every fixed $i\in\{1,\ldots,m\}$ and every
  $\lambda\in[\lambda_0,\lambda_0+\nu]$, the function
  $$
  (x,\lambda)\mapsto w_i^{(\lambda)}(x) 
  = u_i(x)-u_i^{(\lambda)}(x),$$
  is (well-defined and) uniformly continuous on $K\times[\lambda_0,\lambda_0+\nu]$ (as it follows
  from \eqref{eq.KinSigmalambdanu}), there exists
  a real $\epsilon = \epsilon(K,\lambda_0) \in (0,\nu)$ (hence, $\epsilon < |\lambda_0|/2$)
  such that
  \begin{equation} \label{eq.wilambdanegativegeneral}
  w_i^{(\lambda)}(x) < w_i^{(\lambda_0)}(x)+\frac{|M_0|}{2}
  \stackrel{\eqref{eq.wilessM0K}}{\leq} \frac{M_0}{2} < 0, \quad\text{$\forall\,\,x\in K$
  and $\forall\,\,\lambda\in[\lambda_0,\lambda_0+\epsilon]$}.
  \end{equation}
  Summing up, if $\lambda\in[\lambda_0,\lambda_0+\epsilon]$, we have $K\subseteq
  \Sigma_\lambda\setminus R_\lambda(\Gamma)$ and 
  $(w_i^{(\lambda)})^+\equiv 0$ on $K$. 
  
  We then turn to prove \eqref{eq.toprovebeforechooseK}. 
  To this end, let $i\in\{1,\ldots,m\}$ 
  and let $\lambda\in(\lambda_0,\lambda_0+\epsilon]$
  be arbitrarily fixed. We consider the (double) sequence of functions defined by
  $$\varphi_{h,k} := \begin{cases}
 (w_i^{(\lambda)})^+\,\phi^2_h\,\psi^2_k, &\text{on $\in\Sigma_\lambda$}, \\
 0, & \text{on $\R^n\setminus\Sigma_\lambda$},
 \end{cases}$$
  where $\{\phi_h\}_{h\in\N}$ is the sequence 
  defined in \eqref{eq.defiphihT} and associated with 
  $\gamma_{\lambda} = \de\Omega\cap\mathit{\Pi}_{\lambda}$, whilst
  $\{\psi_k\}_{k\in\N}$ is the sequence 
  defined in \eqref{eq.defipsikT} and associated with 
  $R_{\lambda}(\Gamma)$
 (actually, the functions $\varphi_{h,k}$ also depend on
  the fixed
  $i$ and $\lambda$; however, in order to avoid cumbersome
  notations, we prefer to not keep
  trace of this dependence in the sequel). \vspace*{0.02cm}
  
  By Lemma \ref{lem.cutoffRlambda},
  for every $h,k\in\N$ we have $\varphi_{h,k}\in \Lip(\overline{\Sigma_\lambda})$
  and $\varphi_{h,k}\equiv 0$ on $\de\Sigma_\lambda$; moreover, by
  \eqref{eq.wilambdanegativegeneral}, there exists an open neighborhood
  $\mathcal{U}\subseteq\Sigma_\lambda\setminus R_\lambda(\Gamma)$ of
  $K$ such that
  \begin{equation} \label{eq.varphihk0onU}
  \text{$(w_i^{(\lambda)})^+ \equiv 0$
  on $\mathcal{U}$}, \quad 
  \text{whence $\varphi_{h,k}\equiv 0$ on $\mathcal{U}$ for every $h,k\in\N$}.
  \end{equation}
  Gathering together all these facts, we deduce that 
  \begin{equation} \label{eq.varphihkH0outK}
  \varphi_{h,k}\in H_0^1(\Sigma_\lambda\setminus K),
  \end{equation}
  Furthermore, since $\varphi_{h,k}\to (w_i^{(\lambda)})^+$ in $H_0^1(\Sigma_\lambda)$ 
  as $h,\,k\to\infty$ (see Lemma \ref{lem.regularitywlambda}), we also get
  \begin{equation} \label{eq.wilambdaposoutK}
   (w_i^{(\lambda)})^+\in H_0^1(\Sigma_\lambda\setminus K).
  \end{equation}
  Owing to \eqref{eq.varphihkH0outK}, and by a standard
  density argument, we are entitled to
  use the function
  $\varphi_{h,k}$ (for every fixed $h,k\in\N$) 
  as test function in \eqref{eq.explicitWlambdasolvesPDE}, obtaining
  (see also \eqref{eq.nablavarphihtwo})
  \begin{align*}
   & \int_{\Sigma_\lambda}|\nabla 
   (w_i^{(\lambda)})^+|^2\,\phi_h^2\,\psi_k^2\,\d x 
   + 2\int_{\Sigma_\lambda}(w_i^{(\lambda)})^+\,
   \psi^2_k\,\phi_h\,\langle\nabla w_i^{(\lambda)},
   \nabla\phi_h\rangle\,\d x\,+
   \\[0.2cm]
   & \qquad\qquad\qquad + 2\int_{\Sigma_\lambda}(w_i^{(\lambda)})^+\,
   \phi^2_h\,\psi_k\,\langle \nabla w_i^{(\lambda)},\nabla\psi_k\rangle\,\d x \\[0.2cm]
   & \qquad
   = \int_{\Sigma_\lambda}\langle\nabla w_i^{(\lambda)},\nabla\varphi_{h,k}\rangle\,\d x
  = \sum_{j = 1}^m\int_{\Sigma_\lambda}c_{ij}(\cdot; \lambda)\,w_j^{(\lambda)}
  \,\varphi_{h,k}\,\d x \\[0.2cm]
  & \qquad
  = \sum_{j = 1}^m\int_{\Sigma_\lambda}c_{ij}(\cdot; \lambda)\,(w_i^{(\lambda)})^+ w_j^{(\lambda)}
  \,\phi_h^2\,\psi_k^2\,\d x.
  \end{align*}
  From this, by \eqref{eq.varphihk0onU}, \eqref{eq.cijboundedabove} 
  and the fact
  that $c_{ij}(\cdot;\lambda)\geq 0$ if $j\neq i$, we get
  \begin{align*}
   & \int_{\Sigma_\lambda\setminus K}|\nabla 
   (w_i^{(\lambda)})^+|^2\,\phi_h^2\,\psi_k^2\,\d x 
   = \int_{\Sigma_\lambda}|\nabla 
   (w_i^{(\lambda)})^+|^2\,\phi_h^2\,\psi_k^2\,\d x 
   \\[0.2cm]
   & \qquad\leq 2\int_{\Sigma_\lambda\setminus K}(w_i^{(\lambda)})^+\,
   \psi^2_k\,\phi_h\,|\nabla w_i^{(\lambda)}|\,
   |\nabla\phi_h|\,\d x\,+ 2\int_{\Sigma_\lambda\setminus K}(w_i^{(\lambda)})^+\,
   \phi^2_h\,\psi_k\,|\nabla w_i^{(\lambda)}|\,|\nabla\psi_k|\,\d x\,+ \\[0.2cm]
   &\qquad\qquad\qquad + \mathbf{c}_f\sum_{j = 1}^m\int_{\Sigma_\lambda\setminus K}
   (w_i^{(\lambda)})^+ (w_j^{(\lambda)})^+
  \,\phi_h^2\,\psi_k^2\,\d x.
  \end{align*}
  We now observe that, since $\nabla w_i^{(\lambda)} = 
  \nabla (w_i^{(\lambda)})^+$ almost everywhere on the set
  $\{w_i^{(\lambda)} > 0\}$, the above inequality can be re-written as
  follows:
  \begin{equation} \label{eq.primostepnumerato}
  \begin{split}
   & \int_{\Sigma_\lambda\setminus K}|\nabla 
   (w_i^{(\lambda)})^+|^2\,\phi_h^2\,\psi_k^2\,\d x 
   \leq \int_{\Sigma_\lambda\setminus K}2\big((w_i^{(\lambda)})^+\,
   \psi_k\,|\nabla\phi_h|\big)\big(\psi_k\phi_h\,|\nabla (w_i^{(\lambda)})^+|\big)
   \,\d x\,+ \\[0.2cm] 
   & \qquad\qquad + \int_{\Sigma_\lambda\setminus K}2\big((w_i^{(\lambda)})^+\,
   \phi_h\,|\nabla\psi_k|\big)\big(\,\phi_h\psi_k\,|\nabla (w_i^{(\lambda)})^+|\big)\,\d x\,+
    \\[0.2cm]
    & \qquad\qquad+ \mathbf{c}_f\sum_{j = 1}^m\int_{\Sigma_\lambda\setminus K}
   (w_i^{(\lambda)})^+ (w_j^{(\lambda)})^+
  \,\phi_h^2\,\psi_k^2\,\d x.
  \end{split}
  \end{equation}
  From this, by using the classical Young's inequality
  $$2 a b\leq 4a^2+\frac{1}{4} b^2 \qquad\big(\text{holding true for every $a,b\geq 0$}\big)$$
  on the integrands of the first two integrals
  in the right-hand side of \eqref{eq.primostepnumerato}, we get
  \begin{align*}
   & \frac{1}{2}\int_{\Sigma_\lambda\setminus K}|\nabla 
   (w_i^{(\lambda)})^+|^2\,\phi_h^2\,\psi_k^2\,\d x \leq
   4\,\int_{\Sigma_\lambda\setminus K}\big[(w_i^{(\lambda)})^+\big]^2\,
   \psi_k^2\,|\nabla\phi_h|^2
   \,\d x\,+ \\[0.2cm] 
   & \qquad+4\int_{\Sigma_\lambda\setminus K}\big[(w_i^{(\lambda)})^+\big]^2
   \phi_h^2\,|\nabla\psi_k|^2\,\d x
   + \mathbf{c}_f\sum_{j = 1}^m\int_{\Sigma_\lambda\setminus K}
   (w_i^{(\lambda)})^+ (w_j^{(\lambda)})^+
  \,\phi_h^2\,\psi_k^2\,\d x.
  \end{align*}
  To proceed further towards the proof of \eqref{eq.toprovebeforechooseK} we observe that,
  since $u_1,\ldots,u_m$ are positive on $\Omega\setminus \Gamma$
  and $R_\lambda(\Sigma_\lambda)\subseteq\Omega$ (by convexity), we have
  $$0\leq (w_i^{(\lambda)})^+ = (u_i-u_i^{(\lambda)})^+\leq u_i, \qquad\text{
  on $\Sigma_\lambda\setminus R_\lambda(\Gamma)$}.$$
  As a consequence, since $u_i$ is continuous on the set
  $\overline{\Sigma_\lambda}\subseteq
  \overline{\Omega}\setminus\Gamma$ (remember that, by as\-sum\-ption
  $\lambda\leq\lambda_0+\epsilon < 0$ and $\Gamma\subseteq\{x_1 = 0\}$), we get
  \begin{align*}
   & \frac{1}{2}\int_{\Sigma_\lambda\setminus K}|\nabla 
   (w_i^{(\lambda)})^+|^2\,\phi_h^2\,\psi_k^2\,\d x \leq
   4\|u\|^2_{L^\infty(\Sigma_{\lambda_0+\epsilon})}
   \,\int_{\Sigma_\lambda\setminus K}
   \psi_k^2\,|\nabla\phi_h|^2
   \,\d x\,+ \\[0.2cm] 
   & \qquad\qquad+4\|u\|^2_{L^\infty(\Sigma_{\lambda_0+\epsilon})}
   \int_{\Sigma_\lambda\setminus K}
   \phi_h^2\,|\nabla\psi_k|^2\,\d x \\[0.2cm]
   & \qquad\qquad + \mathbf{c}_f\sum_{j = 1}^m\int_{\Sigma_\lambda\setminus K}
   (w_i^{(\lambda)})^+ (w_j^{(\lambda)})^+
  \,\phi_h^2\,\psi_k^2\,\d x \\[0.2cm]
  & \qquad \big(\text{by \eqref{eq.propertiespsik}, \eqref{eq.properypsikintegral},
  \eqref{eq.propertiesphih} and \eqref{eq.properyphihintegral}}\big) \\[0.2cm]
  & \qquad \leq 16\|u\|^2_{L^\infty(\Sigma_{\lambda_0+\epsilon})}
  \bigg(\frac{1}{h}+\frac{1}{k}\bigg)+
  \mathbf{c}_f\sum_{j = 1}^m\int_{\Sigma_\lambda\setminus K}
   (w_i^{(\lambda)})^+ (w_j^{(\lambda)})^+\,\d x.
  \end{align*}
  Letting $h,k\to\infty$ (and using Fatou's lemma, see 
  \eqref{eq.propertiespsik} and \eqref{eq.propertiesphih}), we then obtain
  \begin{equation} \label{eq.secondostepnumerato}
   \begin{split}
    &\frac{1}{2}\int_{\Sigma_\lambda\setminus K}|\nabla 
   (w_i^{(\lambda)})^+|^2\,\d x  \leq
   \mathbf{c}_f\sum_{j = 1}^m\int_{\Sigma_\lambda\setminus K}
   (w_i^{(\lambda)})^+ (w_j^{(\lambda)})^+\,\d x \\[0.2cm]
   & \qquad\qquad\qquad
   \leq \mathbf{c}_f\|(w_i^{(\lambda)})^+\|_{L^2(\Sigma_\lambda\setminus K)}\sum_{j = 1}^m
   \|(w_j^{(\lambda)})^+\|_{L^2(\Sigma_\lambda\setminus K)}.
   \end{split}
  \end{equation}
  Now, by exploiting \eqref{eq.wilambdaposoutK}, we can apply 
  \eqref{PerPoincare} (for the Sobolev space $H_0^1(\Sigma_\lambda\setminus K)$)
  in the right-hand side
  of \eqref{eq.secondostepnumerato}: this gives
  \begin{equation} \label{eq.terzostepnumerato}
   \begin{split}
    & \frac{1}{2}\|\nabla (w_i^{(\lambda)})^+\|_{L^2(\Sigma_\lambda\setminus K)}^2 
    = \int_{\Sigma_\lambda\setminus K}|\nabla 
   (w_i^{(\lambda)})^+|^2\,\d x \\[0.2cm]
   & \qquad\qquad\leq \theta^2_n(\Sigma_\lambda\setminus K)\,\mathbf{c}_f\,
   \|\nabla (w_i^{(\lambda)})^+\|_{L^2(\Sigma_\lambda\setminus K)}\sum_{j = 1}^m
   \|\nabla (w_j^{(\lambda)})^+\|_{L^2(\Sigma_\lambda\setminus K)},
   \end{split}
   \end{equation}
	Finally, to complete the demonstration
   of assertion (c)
   we observe that, if 
   \begin{equation} \label{eq.touseinterlinea}
   \text{$\|\nabla(w_i^{(\lambda)})^+\|_{L^2(\Sigma_\lambda\setminus K)}
   = 0$},
   \end{equation}
   then \eqref{eq.toprovebeforechooseK} is trivially satisfied.
   If, instead, \eqref{eq.touseinterlinea} does not hold, 
   by 
   \eqref{eq.terzostepnumerato} one has
   \begin{align*}
    \frac{1}{2}\|\nabla (w_i^{(\lambda)})^+\|_{L^2(\Sigma_\lambda\setminus K)}
    \leq \theta^2_n(\Sigma_\lambda\setminus K)\,\mathbf{c}_f\,
   \sum_{j = 1}^m
   \|\nabla (w_j^{(\lambda)})^+\|_{L^2(\Sigma_\lambda\setminus K)},
   \end{align*}
   and this is precisely 
   the desired \eqref{eq.toprovebeforechooseK} . \medskip

   Now that we have proved \eqref{eq.toprovebeforechooseK}, we are ready to complete
   the proof of the present step. To begin with, let $\delta_0 > 0$ be a fixed
   real number such that
   $$K_\delta := \big\{x\in
   \Sigma_{\lambda_0}\setminus R_{\lambda_0}(\Gamma):\,
   \mathrm{dist}\big(x,\de(\Sigma_{\lambda_0}\setminus R_{\lambda_0}(\Gamma)\big)
   \geq \delta\big\}\neq\varnothing, \quad\forall\,\,\delta\in(0,\delta_0].$$
   Moreover, given any $\delta\in(0,\delta_0]$, let 
   $\epsilon_\delta = \epsilon(K_\delta,\lambda_0) \in (0,|\lambda_0|/2)$ be such that
   assertions (a)-to-(c) hold true for every 
   $\lambda\in[\lambda_0,\lambda_0+\epsilon_\delta]$ (note that $K_\delta$ has Lipschitz boundary).
   
   Since $|R_\lambda(\Gamma)| = 0$ for every $\lambda\in\R$
   (both in the case $n = 2$ and in the case $n\geq 3$, see assumption
   (H.2) and, e.g, \cite[Sec. 4.7]{EG}), it is very easy to recognize that
   \begin{equation} \label{eq.alpostodi}
    \begin{array}{c}
    \text{for every $\eta > 0$ there exists} \\[0.1cm]
    \text{$\delta_\eta\in(0,\delta_0)$ such that}
    \end{array}\qquad
     \theta_n(\Sigma_\lambda\setminus K_\delta) < \eta \qquad
    \begin{array}{c}
     \text{for every $0<\delta<\delta_\eta$ and} \\[0.1cm]
     \text{every $\lambda\in[\lambda_0,\lambda_0+\epsilon_\delta]$}.
    \end{array}
   \end{equation}      
    Starting from \eqref{eq.toprovebeforechooseK} and
   performing an induction argument analogous to that 
   in Step I (in which the information
   $\theta_n(\Sigma_t)\to 0$ as $t\to\mathbf{a}_\Omega$
   is replaced by \eqref{eq.alpostodi}), we
   infer the existence of a small $\sigma\in(0,\delta_0)$ 
   and of a real $C_m = C_m(\mathbf{c}_f) > 0$ such that
   \begin{equation} \label{eq.conseginduzioneStepII}
    \|\nabla (w_{i}^{(\lambda)})^+\|_{L^{2}(\Sigma_\lambda\setminus K_\sigma)} \leq 
     C_m\,\theta^{2}_{n}(\Sigma_{\lambda}\setminus K_\sigma)
     \sum_{j\geq i+1} \|\nabla 
    (w_{j}^{(\lambda)})^+\|_{L^{2}(\Sigma_\lambda\setminus K_\sigma)},
   \end{equation}      
   for every $i = 1,\ldots,m-1$ and every 
   $\lambda\in[\lambda_0,\lambda_0+\epsilon_\sigma]$.
   From this, again by arguing exactly as in Step I, we can use a
   backward induction argument to prove that
   \begin{equation} \label{eq.claimfinaleStepII}
    \|\nabla (w_{j}^{(\lambda)})^{+}\|_{L^{2}(\Sigma_{\lambda}\setminus K_\sigma)} \leq 
  \mathcal{C}_m\,\theta^2_n (\Sigma_\lambda\setminus K_\sigma)
  \,\|\nabla (w_{m}^{(\lambda)})^{+}\|_{L^{2}(\Sigma_{\lambda}\setminus K_\sigma)},
   \end{equation}
   for all $j\in\{1,\ldots,m-1\}$ and 
   every $\lambda\in[\lambda_0,\lambda_0+\epsilon_\sigma]$
   (here, as usual, $\mathcal{C}_m > 0$ is a real constant only depending
   on $\mathbf{c}_f$).
   By combining \eqref{eq.conseginduzioneStepII} with
   \eqref{eq.claimfinaleStepII},
   and by possibly shrinking $\sigma$ if necessary, we obtain
   (see also \eqref{eq.daRichiamareinStepII} in the last part of Step I
   and remember that the vector-valued map $W_\lambda$ is con\-ti\-nuo\-us out 
   of $R_\lambda(\Gamma)$, see \eqref{eq.regulWlambda})
   \begin{equation} \label{eq.ultimabeforeconclude}
   \text{$w_i^{(\lambda)}\leq 0$
    on $\Sigma_\lambda\setminus (K_\sigma\cup R_\lambda(\Gamma))$} 
    \quad \text{$\forall\,\,i\in\{1,\ldots,m\}$
   and $\forall\,\,\lambda\in[\lambda_0,\lambda_0+\epsilon_\sigma]$}.
   \end{equation}
   Gathering together
   \eqref{eq.ultimabeforeconclude} and assertion (b),
   we then conclude that
   $$\text{$w_i^{(\lambda)} \leq 0$
   on $\Sigma_\lambda\setminus R_\lambda(\Gamma)$}\quad\text{for every $i\in\{1,\ldots,m\}$
   and every $\lambda\in[\lambda_0,\lambda_0+\epsilon_\sigma]$}.$$
   From this, a last application of the Strong Maximum Principle gives
   (as $\lambda_0 < 0$)
   $$\text{$u_i - u_i^{(\lambda)} = w_i^{(\lambda)}< 0$
   on $\Sigma_\lambda\setminus R_\lambda(\Gamma)$}\quad\text{$\forall\,\,i\in\{1,\ldots,m\}$
   and $\forall\,\,\lambda\in[\lambda_0,\lambda_0+\epsilon_\sigma]$},$$
   but this is contradiction with the definition of $\lambda_0$.
   Hence, $\lambda_0 = 0$. \medskip
   
   \textsc{Step III:} In this step we prove that all the functions $u_1,\ldots,u_m$
   are symmetric with respect to the hyperplane $\mathit{\Pi} = \{x_1 = 0\}$.
   To this end we first observe that, since we know from Step II that
   $\lambda_0 = \sup\mathcal{I} = 0$ and since $W_\lambda$ is continuous
   out of $R_\lambda(\Gamma)$, one has
   \begin{equation} \label{eq.usymmetricI}
    u_i(x_1,x_2,\ldots,x_n) \leq u_i(-x_1,x_2,\ldots,x_n)
   = u_i^{(0)}(x_1,\ldots,x_n),
   \end{equation}
   for every $i\in\{1,\ldots,m\}$ and every $x\in\Omega_0 = \Omega\cap\{x_1<0\}$.
   By applying this result to the vector-valued function $\hat U:\Omega\to\R^m$ defined by
   $$\hat U(x) := U(-x_1,x_2,\ldots,x_n)$$
   (which has the same regularity of $U$ and is a solution \eqref{eq.mainsystem}), we obtain
   \begin{equation} \label{eq.usymmetricII}
    u_i(-x_1,\ldots,x_n) = 
   \hat u_i(x_1,x_2,\ldots,x_n)\leq \hat u_i(-x_1,x_2,\ldots,x_n)
   = u_i(x_1,\ldots,x_n),
   \end{equation}
   for every $i\in\{1,\ldots,m\}$ and every $x\in\Omega_0$.
   By combining \eqref{eq.usymmetricI} with \eqref{eq.usymmetricII} we get
   $$u_i(-x_1,\ldots,x_n) = u_i(x_1,\ldots,x_n),
   \quad \text{$\forall\,\,i\in\{1,\ldots,m\}$ and $\forall\,\,x\in\Omega\cap\{x_1<0\}$},$$
   and this proves that $u_1,\ldots,u_m$ are symmetric with respect
   to $\mathit{\Pi}$. \medskip
   
   \textsc{Step IV:} In this last step we prove \eqref{eq.monotoneui},
   which clearly implies the monotonicity of the functions
    $u_1,\ldots,u_n$
   in the $x_1$-direction on $\Omega\cap\{x_1<0\}$.
   To this we first observe that, again from the fact that $\lambda_0 = \mathcal{I} = 0$
   (see Step II),
   we have 
   $$\text{$w_i^{(\lambda)} = u_i-u_i^{(\lambda)} < 0$ on
   $\Sigma_\lambda\setminus R_\lambda(\Gamma)$}, \qquad
   \text{$\forall\,\,i\in\{1,\ldots,m\}$ and $\forall\,\,\lambda\in(\mathbf{a}_\Omega,0)$}.$$
   Moreover, $w_i^{(\lambda)}\equiv 0$ on the hyperpalen $\mathit{\Pi}_\lambda
   = \{x_1 = \lambda\}$ and, by \eqref{eq.solvedbywilambdaperHopf},
   $$-\Delta w_i^{(\lambda)} + 
   (\mathbf{c}_f-c_{ii}(\cdot;\lambda)\big)w_i^{(\lambda)}\leq 0,\quad
   \text{on $\Sigma_\lambda\setminus R_\lambda(\Gamma)$}$$
   (where $\mathbf{c}_f$ is as in assumption (H.3) and the $c_{ij}(\cdot;\lambda)$'s
   are defined in \eqref{eq.deficij}).
   Since, by the choice of $\mathbf{c}_f$, we have 
   $\mathbf{c}_f-c_{ii}(\cdot;\lambda)\geq 0$ on $\Sigma_\lambda\setminus R_\lambda(\Gamma)$,
   we are entitled to apply the Hopf lemma for $C^1$-subsolutions
   in \cite{PS}
   (see \eqref{eq.regulWlambda} and note that $\Sigma_\lambda\setminus R_\lambda(\Gamma)$
   certainly satisfies the interior ball condition at any point
   of $\mathit{\Pi}_\lambda\cap\Omega$): this gives
   $$0 < \frac{\de w_i^{(\lambda)}}{\de x_1}(x)
   = 2\,\frac{\de u_i}{\de x_1}(x), \quad\text{$\forall\,\,i\in\{1,\ldots,m\}$
   and every $x\in\mathit{\Pi}_\lambda\cap\Omega$},$$
   which clearly implies the desired \eqref{eq.monotoneui}.
   Hence, the proof of Theorem~\ref{thm.MAIN}
   is complete.
 \end{proof}

\end{document}